
\documentclass[12pt]{amsart}
\usepackage{amssymb,amscd}
\usepackage{verbatim}

\textwidth 6.5truein
\textheight 8.67truein
\oddsidemargin 0truein
\evensidemargin 0truein
\topmargin 0truein

\let\Bbb\mathbb

\def\>{\relax\ifmmode\mskip.666667\thinmuskip\relax\else\kern.111111em\fi}
\def\<{\relax\ifmmode\mskip-.333333\thinmuskip\relax\else\kern-.0555556em\fi}
\def\vsk#1>{\vskip#1\baselineskip}
\def\vv#1>{\vadjust{\vsk#1>}\ignorespaces}
\def\vvn#1>{\vadjust{\nobreak\vsk#1>\nobreak}\ignorespaces}

  \let\ssize\scriptstyle
\let\sssize\scriptscriptstyle

\let\Medskip\medskip
\def\medskip{\par\Medskip}
\let\Bigskip\bigskip
\def\bigskip{\par\Bigskip}

\let\Maketitle\maketitle
\def\maketitle{\Maketitle\thispagestyle{empty}\let\maketitle\empty}

\newtheorem{thm}{Theorem}[section]
\newtheorem{cor}[thm]{Corollary}
\newtheorem{lem}[thm]{Lemma}

\numberwithin{equation}{section}

\theoremstyle{definition}
\newtheorem*{rem}{Remark}
\newtheorem*{example}{Example}

\let\mc\mathcal
\let\nc\newcommand

\let\ka\kappa
\let\la\lambda
\let\La\Lambda

\let\phi\varphi
\let\si\sigma

\let\om\omega

\let\der\partial

\let\leq\leqslant

\let\on\operatorname
\let\bi\bibitem
\let\bs\boldsymbol

\def\C{{\mathbb C}}

\def\B{{\mc B}}

\def\F{{\mc F}}

\def\+#1{^{\{#1\}}}

\def\End{\on{End}}

\def\gln{\mathfrak{gl}_N}
\def\sln{\mathfrak{sl}_N}

\def\beq{\begin{equation}}
\def\eeq{\end{equation}}
\def\be{\begin{equation*}}
\def\ee{\end{equation*}}

\nc{\bea}{\begin{eqnarray*}}
\nc{\eea}{\end{eqnarray*}}
\nc{\bean}{\begin{eqnarray}}
\nc{\eean}{\end{eqnarray}}
\nc{\Ref}[1]{{\rm(\ref{#1})}}

\let\ga\gamma

\nc{\Il}{{\mc I_{\bs\la}}}
\nc{\bla}{{\bs\la}}
\nc{\Fla}{\F_\bla}
\nc{\tfl}{{T^*\Fla}}
\nc{\GL}{{GL_n(\C)}}
\nc{\GLC}{{GL_n(\C)\times\C^*}}

\let\sd s 

\def\ddk_#1{\kk_{#1}\<\>\frac\der{\der\<\>\kk_{#1}}}

\def\bul{\mathbin{\raise.2ex\hbox{$\sssize\bullet$}}}
\def\intt{\mathchoice
{\mathop{\raise.2ex\rlap{$\,\,\ssize\backslash$}{\intop}}\nolimits}
{\mathop{\raise.3ex\rlap{$\,\sssize\backslash$}{\intop}}\nolimits}
{\mathop{\raise.1ex\rlap{$\sssize\>\backslash$}{\intop}}\nolimits}
{\mathop{\rlap{$\sssize\<\>\backslash$}{\intop}}\nolimits}}

\let\kk q 
\let\cc c

\let\Ko K

\def\GZ/{Gelfand-Zetlin}
\def\KZ/{{\slshape KZ\/}}
\def\qKZ/{{\slshape qKZ\/}}
\def\XXX/{{\slshape XXX\/}}

\nc{\slnl}{{\sln (\lambda)}}
\nc{\PCN}{{   (\C[x])^N   }}
\nc{\di}{\on{Diag}}
\nc{\dio}{\on{Diag}_0}
\nc{\Mm}{{\mc M}}
\nc{\Nn}{{\mc N}}

\nc{\A}{{\mc C}}

\nc{\PCr}{{  P  (\C[x])^n   }}

\nc{\Pk}{{(\bs{P}^1)^k}}

\nc{\N}{{\Bbb N}}

\nc{\Ll}{{\mc L}}

\nc{\ord}{{\on{ord}\,}}

\nc{\Sing}{{\on{Sing}\,}}
\nc{\sing}{{\on{Sing}\,}}

\nc{\Hess}{{\on{Hess}}}

\nc{\R}{{\Bbb R}}

\let\on\operatorname
\nc{\Kk}{{\bs K}}
\nc{\Ap}{{A_\Phi(z)}}
\nc{\ap}{{A_\Phi(z)}}

\nc{\sv}{{\sing V}}
\nc{\cd}{{\C^n-\Delta}}
\nc{\UT}{{U^0}}   
\nc{\Spect}{\on{Spec}\nolimits}

\begin{document}

\hrule width0pt
\vsk->

\title[Characteristic variety  of Gauss-Manin differential equations ]
{Characteristic variety of the Gauss-Manin differential equations
of a generic parallelly translated  arrangement }

\author
[Alexander Varchenko ]
{ Alexander Varchenko$\>^{\star}$}

\maketitle

\begin{center}
{\it Department of Mathematics, University of North Carolina
at Chapel Hill\\ Chapel Hill, NC 27599-3250, USA\/}
\end{center}

{\let\thefootnote\relax
\footnotetext{\vsk-.8>\noindent
$^\star$\,{\it E-mail}: anv@email.unc.edu,  supported in part by NSF grant DMS--1101508}}

\medskip

\begin{abstract}
We consider a weighted family of $n$ generic parallelly translated hyperplanes in $\C^k$  and describe  the characteristic variety of the Gauss-Manin
differential equations for associated hypergeometric integrals.
The characteristic variety is given as the zero set of Laurent polynomials, whose coefficients are determined by weights and
the Pl\"ucker coordinates of the associated point in the Grassmannian Gr$(k,n)$.
The Laurent polynomials are in involution.
\end{abstract}

{\small \tableofcontents  }

\setcounter{footnote}{0}
\renewcommand{\thefootnote}{\arabic{footnote}}

\section{Introduction}
There are three places, where a flat connection depending on a parameter appears:

\noindent
$\bullet$\  KZ equations,
$\kappa \frac{\der I}{\der z_i}(z) = K_i(z) I(z)$,  $z=(z_1,\dots,z_n)$, $i=1,\dots,n$.
Here $\kappa$ is a parameter, $I(z)$  a $V$-valued function, where  $V$ is a vector space from representation theory,
$K_i(z):V\to V$ are linear operators, depending on $z$. The connection is flat for all $\kappa$, see for example \cite{EFK, V1}.

\noindent
$\bullet$\ Quantum differential equations,
$\kappa \frac{\der I}{\der z_i}(z) = p_i *_z I(z)$,  $z=(z_1,\dots,z_n)$, $i=1,\dots,n.$
Here $p_1,\dots,p_n$ are generators of some commutative algebra $H$ with quantum multiplication $*_z$ depending on $z$.
The connection is flat for all $\kappa$.
These equations are part of the Frobenius structure on the quantum cohomology of a variety, see \cite{D, M}.

\noindent
$\bullet$\ Differential equations for hypergeometric integrals associated with a family of weighted arrangements with parallelly
 translated hyperplanes,
$\kappa \frac{\der I}{\der z_i}(z) = K_i(z) I(z)$,  $ z=(z_1,\dots,z_n)$, $ i=1,\dots,n$. The connection is flat for all $\kappa$,
see for example \cite{V5, OT}.

\smallskip
If $\kappa \frac{\der I}{\der z_i}(z) = K_i(z) I(z)$, $i=1,\dots,n$, is a system of $V$-valued differential equations of one of these types, then
its characteristic variety is 
\be
\Spect = \{(z,p)\in T^*\C^n\ |\ \exists
v\in V\ \text{with}\ K_j(z)v = p_jv,\ j\in J\}.
\ee
It is known that the characteristic varieties of the first two types of differential equation are interesting. For example,
the characteristic variety of the quantum differential equation of the flag variety is the zero set of the Hamiltonians of the classical Toda lattice,
according to \cite{G, GK}, and the characteristic variety of the $\gln$  KZ equations with values in the tensor power of the vector representation  is the zero
set of the Hamiltonians of the classical Calogero-Moser system, according to \cite{MTV}.

In this paper we describe the characteristic variety of the Gauss-Manin differential equations for hypergeometric integrals associated with
a weighted family of $n$ generic parallelly translated hyperplanes in $\C^k$.
The characteristic variety is given as the zero set of Laurent polynomials, whose coefficients are determined by weights and
the Pl\"ucker coordinates of the associated point in the Grassmannian Gr$(k,n)$.
The Laurent polynomials are in involution.

It is known that the KZ differential equations can be identified with Gauss-Manin differential equations of certain weighted families of
parallelly translated hyperplanes, see \cite{SV}, as well as some  quantum differential equations
can be identified with Gauss-Manin differential equations of certain weighted families of
parallelly translated hyperplanes, see \cite{TV}.
Therefore, the results in this paper on the characteristic variety of the Gauss-Manin
differential equations associated with a family   of generic parallelly translated hyperplanes
can be considered as a first step to studying characteristic varieties of more general KZ and quantum
 differential equations, which admit integral hypergeometric representations.

The Laurent polynomials, defining our characteristic variety,  are regular functions of the Pl\"ucker coordinates
of the associated point in  $\on{Gr}(k,n)$. Therefore they can be used to study the characteristic
varieties of more general Gauss-Manin differential equations for multidimensional hypergeometric integrals.

Our description of the characteristic variety is based on the fact, proved in \cite{V3},
that the characteristic variety of the Gauss-Manin differential equations is generated by the master function
of the corresponding hypergeometric integrals, that is, the characteristic variety coincides with the Lagrangian variety
of the master function. That fact is a generalization of Theorem 5.5 in \cite{MTV}, proved with the help of the Bethe ansatz, that
the local algebra of a critical point of the master function associated with a $\gln$ KZ equation can be identified with
a suitable local Bethe algebra of the corresponding $\gln$ module.

\smallskip
In Section \ref{sec 2}, we consider the algebra of functions on the critical set of the master function and
describe it by generators and relations.

In Section \ref{ sec Lagrangian var function}, we show that these relations give
us equations defining the Lagrangian variety of the master function. We show that the corresponding functions are in involution.
We define coordinate systems $(z_I,p_{\bar I})$ on the Lagrange variety and for each of them a function $\Phi(z_I,p_{\bar I})$ 
also generating the Lagrangian variety.
We describe the Hessian of the master function lifted to the Lagrangian variety and relate it to the Jacobian of the
projection of the  Langrangian variety
to the base of the family.

In Section \ref{sec last}, we remind the identification from \cite{V3} of the Lagrangian variety of the master function
and the characteristic variety of the Gauss-Manin differential equations.


\section{Algebra of functions on the critical set}
\label{sec 2}

\subsection{An arrangement in  $\C^n\times\C^k$}
\label{An arrangement in ck}
Let $n>k$ be positive integers. Denote $J=\{1,\dots,n\}$.
Consider $\C^k$ with coordinates $t_1,\dots,t_k$,\
$\C^n$ with coordinates $z_1,\dots,z_n$. Fix $n$
 linear functions on $\C^k$,
$g_j=\sum_{m=1}^k b^m_jt_m,$\ $ j\in J,$
$b_j^m\in \C$. For ${i_1,\dots,i_k}\subset J$,
denote
$d_{i_1,\dots,i_k} = \text{det}_{\ell,m=1}^k (b^m_{i_\ell})$.
We assume that all the numbers $d_{i_1,\dots,i_k}$ are nonzero if ${i_1,\dots,i_k}$ are distinct. In other words,
 we assume that the collection of functions $g_j, j\in J$, is generic.
We define $n$ linear functions on $\C^n\times\C^k$,
$f_j = z_j+g_j,$\ $ j\in J.$
 We define
 the arrangement of hyperplanes $\tilde \A = \{ \tilde H_j\ | \  j\in J \}$ in $\C^n\times \C^k$, where $\tilde H_j$ is the zero set of $f_j$.
Denote by $ U(\tilde\A) = \C^n\times\C^k - \cup_{j\in J} \tilde H_j$ the complement.

For every $z=(z_1,\dots,z_n)\in \C^n$, the arrangement $\tilde \A$
induces an arrangement $\A(z)$ in the fiber over $z$ of the projection $\pi : \C^n\times\C^k\to\C^n$. We
identify every fiber with $\C^k$. Then $\A(z)$ consists of
hyperplanes  $H_j(z), j\in J$, defined in $\C^k$ by the equations
$f_j=0$.
Denote by $ U(\A(z)) = \C^k - \cup_{j\in J} H_j(z)$  the complement.

The arrangement $\A(z)$ is with normal crossings if and only if $z \in \C^n-\Delta$,
\bean
\Delta = \cup_{\{i_1<\dots<i_{k+1}\}\subset J}H_{i_1,\dots,i_{k+1}},
\eean
where $H_{i_1,\dots,i_{k+1}}$ is the hyperplane in $\C^n$ defined by the equation
$f_{i_1,\dots,i_{k+1}}(z)=0$,
\bean
\label{ i_1,...,i_k}
f_{i_1,\dots,i_{k+1}}(z)= \sum_{m=1}^{k+1} (-1)^{m-1}  d_{i_1,\dots,\widehat{i_m},\dots,i_{k+1}} z_{i_m}.
\eean
We have the following identify
\bean
\label{id 1}
\sum_{m=1}^{k+1} (-1)^{m-1} d_{i_1,\dots,\widehat{i_m},\dots,i_{k+1}} (z_{i_m} - f_{i_m}(z,t))  = 0.
\eean

\begin{lem}
\label{dim S}
Consider the $\C$-span $S$ of the linear functions $f_{i_1,\dots,i_{k+1}}$, where $\{i_1,\dots,i_{k+1}\}$ runs through all
$k+1$-element subsets of $J$. Then $\dim S = n-k$.
\end{lem}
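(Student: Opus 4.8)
The plan is to reinterpret each linear function $f_{i_1,\dots,i_{k+1}}$ as the vector of its $z$-coefficients in $\C^n$, and to identify $S$ with the span of these coefficient vectors, a subspace of $\C^n$. I would set $B=(b^m_j)$, the $k\times n$ matrix whose $j$-th column is $(b^1_j,\dots,b^k_j)$, so that the covector $g_j\in(\C^k)^*$ corresponds to the $j$-th column of $B$ and a relation $\sum_j c_j\>g_j=0$ corresponds to $c\in\ker B$. Since genericity forces $d_{1,\dots,k}\ne0$, there is a nonzero $k\times k$ minor, so $\rank B=k$ and $R:=\ker B\subseteq\C^n$ has dimension $n-k$.

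The upper bound $\dim S\le n-k$ comes essentially for free from identity \Ref{id 1}. Because $z_{i_m}-f_{i_m}(z,t)=-g_{i_m}$, that identity reads $\sum_{m=1}^{k+1}(-1)^{m-1}d_{i_1,\dots,\widehat{i_m},\dots,i_{k+1}}\>g_{i_m}=0$, which says precisely that the coefficient vector of $f_{i_1,\dots,i_{k+1}}$ — the vector carrying $(-1)^{m-1}d_{i_1,\dots,\widehat{i_m},\dots,i_{k+1}}$ in position $i_m$ and $0$ elsewhere — lies in $R=\ker B$. Hence every spanning function of $S$ has coefficient vector in $R$, so $S\subseteq R$ and $\dim S\le n-k$.

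For the matching lower bound I would exhibit $n-k$ explicitly independent functions, the natural candidates being $f_{1,\dots,k,j}$ for $j=k+1,\dots,n$. In $f_{1,\dots,k,j}$ the variable $z_j$ occurs with coefficient $(-1)^k d_{1,\dots,k}\ne0$, while every other variable occurring is among $z_1,\dots,z_k$; thus each such function introduces a ``fresh'' variable $z_j$ that appears in it and in no other function of the list. A leading-term argument then forces any vanishing combination $\sum_{j>k}\al_j f_{1,\dots,k,j}=0$ to satisfy $\al_j=0$ for all $j$, simply by reading off the coefficient of each $z_j$. So these $n-k$ functions are linearly independent, giving $\dim S\ge n-k$, and combining the two bounds yields $\dim S=n-k$.

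The one conceptual point of the argument is the first step: recognizing, via \Ref{id 1}, that each $f_{i_1,\dots,i_{k+1}}$ is nothing but a Pl\"ucker/Cramer relation among the generic covectors $g_1,\dots,g_n$, so that $S$ is trapped inside the $(n-k)$-dimensional relation space $\ker B$. I do not expect a genuine obstacle here: once this identification is made, the upper bound is automatic and the genericity condition $d_{1,\dots,k}\ne0$ makes the lower-bound independence immediate, so the rest is bookkeeping.
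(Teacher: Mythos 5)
Your proof is correct, but it takes a different route from the paper's. The paper argues by duality: it observes that $\dim S$ equals the codimension in $\C^n$ of the common zero locus $X_1=\{z\mid f_I(z)=0\ \text{for all}\ I\}$, identifies $X_1$ with the image under $\pi$ of the $k$-dimensional subspace $X_2=\{(z,t)\mid f_j(z,t)=0\ \text{for all}\ j\}$, and concludes $\dim X_1=k$, hence $\dim S=n-k$. You instead work directly with the coefficient vectors: the inclusion $S\subseteq\ker B$ extracted from \Ref{id 1} gives $\dim S\le n-k$ (this is the dual statement to the paper's inclusion $\pi(X_2)\subseteq X_1$), and the explicit family $f_{1,\dots,k,j}$, $j=k+1,\dots,n$, each containing a fresh variable $z_j$ with nonzero coefficient $(-1)^k d_{1,\dots,k}$, gives $\dim S\ge n-k$. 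Your lower bound is the dual of the paper's implicit claim that $X_1$ is exactly the image of $X_2$ (i.e., $\dim X_1\le k$), a point the paper asserts without detail; your version makes it completely explicit and elementary, at the modest cost of singling out the index set $\{1,\dots,k\}$, which is harmless since genericity guarantees $d_{1,\dots,k}\ne0$. Both arguments are sound; yours is more self-contained, the paper's is shorter and more geometric.
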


\begin{proof} The dimension of $S$ equals the codimension in $\C^n$  of $X_1=\{z\in\C^n\  |\ f_I(z)= 0\ \text{for\,all}\ I\}$.
The subspace $X_1$ is the image of the subspace $X_2=\{(z,t)\in \C^n \times \C^k \ | \ f_j(z,t)=0 \ \text{for\,all}\ j\in J\}$ under the projection
$\pi : \C^n\times\C^k\to \C^n$. Clearly the subspace $X_2$ is $k$-dimensional and the projection
$\pi|_{X_2} : X_2\to X_1$ is an isomorphism. Hence $\dim X_1=k$ and $\dim S=n-k$.
\end{proof}

\subsection{Pl\"ucker coordinates}
The matrix $(b^m_j)$ is an $n\times m$-matrix of rank $k$. The matrix defines a point
in the Grassmannian $\on{Gr}(k,n)$ of $k$-planes in $\C^n$. The numbers $d_{i_1,\dots,i_k}$ are Pl\"ucker coordinates of this point.
Most of objects in this paper is determined in terms of these  Pl\"ucker coordinates.
We will use the following Pl\"ucker relation.

\begin{lem}
\label{plucker}

For arbitrary sequences $j_1,\dots,j_{k+1}$ and $i_1,\dots, i_{k-1}$  in $J$, we have
\bean
\label{Plu}
\sum_{m=1}^{k+1} (-1)^{m-1} d_{j_1,\dots,\widehat{j_m},\dots,j_{k+1}} d_{j_m,i_{1}\dots,i_{k-1}} =0.
\eean
\end{lem}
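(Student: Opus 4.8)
The plan is to recognize the claimed identity as the classical Grassmann--Pl\"ucker relation for the maximal minors of the matrix $(b^m_j)$, and to deduce it from a single vanishing determinant. First I would introduce, for each $j\in J$, the row vector $v_j=(b^1_j,\dots,b^k_j)\in\C^k$. With the convention that $\det(w_1,\dots,w_k)$ denotes the determinant of the $k\times k$ matrix whose $\ell$-th row is $w_\ell$, the definition $d_{i_1,\dots,i_k}=\det_{\ell,m=1}^k(b^m_{i_\ell})$ becomes simply $d_{i_1,\dots,i_k}=\det(v_{i_1},\dots,v_{i_k})$. Thus \Ref{Plu} reads
\be
\sum_{m=1}^{k+1}(-1)^{m-1}\det(v_{j_1},\dots,\widehat{v_{j_m}},\dots,v_{j_{k+1}})\,\det(v_{j_m},v_{i_1},\dots,v_{i_{k-1}})=0 .
\ee

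The key step is an auxiliary linear-algebra lemma: for any $k+1$ vectors $u_1,\dots,u_{k+1}\in\C^k$ and any linear functional $\varphi$ on $\C^k$,
\be
\sum_{m=1}^{k+1}(-1)^{m-1}\varphi(u_m)\,\det(u_1,\dots,\widehat{u_m},\dots,u_{k+1})=0 .
\ee
To prove this I would form the $(k+1)\times(k+1)$ matrix $A$ whose first row is $(\varphi(u_1),\dots,\varphi(u_{k+1}))$ and whose remaining $k$ rows are the coordinate rows of the $u_m$, i.e. the $(i+1)$-st row of $A$ is $(u_1^i,\dots,u_{k+1}^i)$. Writing $\varphi(w)=\sum_i c_i w^i$, the first row of $A$ equals $\sum_{i=1}^k c_i$ times the $(i+1)$-st row, so the rows of $A$ are linearly dependent and $\det A=0$. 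On the other hand, Laplace expansion of $\det A$ along its first row produces exactly the alternating sum above, since deleting the first row and the $m$-th column leaves the $k\times k$ minor $\det(u_1,\dots,\widehat{u_m},\dots,u_{k+1})$ with sign $(-1)^{1+m}=(-1)^{m-1}$.

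To finish, I would apply the lemma with $u_m=v_{j_m}$ and with the functional $\varphi(w)=\det(w,v_{i_1},\dots,v_{i_{k-1}})$, which is linear in $w$ by multilinearity of the determinant, the other $k-1$ arguments being held fixed. Then $\varphi(v_{j_m})=d_{j_m,i_1,\dots,i_{k-1}}$ and $\det(v_{j_1},\dots,\widehat{v_{j_m}},\dots,v_{j_{k+1}})=d_{j_1,\dots,\widehat{j_m},\dots,j_{k+1}}$, so the lemma specializes to \Ref{Plu}. I do not expect a genuine obstacle here: the argument is a formal polynomial identity in the entries $b^m_j$, so no genericity or distinctness of the indices is needed, and the identity continues to hold when some of the $j$'s or $i$'s coincide (the relevant determinants then simply vanish). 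The only point requiring care is the bookkeeping of signs in the Laplace expansion and the confirmation that $\det(\,\cdot\,,v_{i_1},\dots,v_{i_{k-1}})$ is indeed a linear functional; both are immediate from the basic properties of the determinant.
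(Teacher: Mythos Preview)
Your argument is correct: the reduction to a vanishing $(k+1)\times(k+1)$ determinant whose first row is a linear combination of the others is a clean and standard proof of the Grassmann--Pl\"ucker relation, and your sign bookkeeping in the Laplace expansion checks out (the minor left after deleting the first row and $m$-th column is the transpose of the matrix with rows $u_1,\dots,\widehat{u_m},\dots,u_{k+1}$, hence has the same determinant).

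The paper itself does not prove this lemma; it simply points to the literature (Kleiman--Laksov) for the statement. So your approach is not merely different but strictly more informative: you supply a self-contained derivation where the paper offers only a citation. The advantage of your route is that it makes transparent why no genericity of the $b^m_j$ or distinctness of indices is needed---the identity is a polynomial identity in the matrix entries---whereas a reader following the paper's reference would have to extract this from the cited source.
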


See this statement, for example, in \cite{KL}.


\subsection{Algebra $A_\Phi(z)$}
\label{sec master k}
Assume that nonzero weights $(a_j)_{j\in J}\subset \C^\times$ are given.
Denote $|a|=\sum_{j\in J}a_j$. Assume that $|a|\ne 0$.

Each arrangement $\A(z)$  is weighted.
The master function of the weighted arrangement $\A(z)$ in $\C^k$ is the function
\bean
\label{def mast k}
\Phi(z,t) = \sum_{j\in J}\,a_j \log f_j(z,t).
\eean
The critical point equations are
\bean
\label{CR}
{\der\Phi}/{\der t_i} = \sum_{j\in J} b^i_j {a_j}/{f_j} = 0, \qquad i=1,\dots,k.
\eean
We  have
  \bean
  \label{der z}
{  \der \Phi}/{\der z_j} = a_j/f_j, \qquad i\in J.
\eean
Denote by $\mc I(z)\subset \mc O(U(\A(z)))$ the ideal generated by the functions $\der \Phi/\der t_j$, $j\in J$.
The algebra of functions on the critical set is
\bean
A_\Phi(z) = \mc O(U(\A(z)))/\mc I(z).
\eean
For a function $g\in \mc O(U(\A(z)))$, denote by $[g]$ its projection to $A_\Phi(z)$. Denote
\bea
p_j  =[a_j/f_j], \qquad j\in J.
\eea

We introduce the following polynomials in
$z_1,\dots,z_n,p_1,\dots,p_n$.
For every subset $I=\{i_1,\dots,i_{k-1}\}$ of distinct elements in $J$, we set
\bean
\label{rel 1st type}
F_I(p_1,\dots,p_n) = \sum_{j\in J} d_{j,i_1,\dots,i_{k-1}} p_j .
\eean
For every subset $I=\{i_1,\dots,i_{k+1}\}$ of distinct elements in $J$, we set
\bean
\label{rel 2nd type}
&&
F_I(z_1,\dots,z_n,p_1,\dots,p_n) =
\\
&&
\notag
\phantom{aaaaa}
= p_{i_1}\dots p_{i_{k+1}}\,f_{i_1, i_2,\dots,i_{k+1}}(z)
+
\sum_{m=1}^{k+1} (-1)^{m} a_{i_m} d_{i_1,\dots,\widehat{i_m},\dots,i_{k+1}} p_{i_1}\dots \widehat{p_{i_m}}\dots p_{i_{k+1}} .
\eean

The following lemma collects  properties of the elements $p_1,\dots,p_n$.

\begin{lem}
\label{p properties}
Let $z\in\C^n-\Delta$.

\begin{enumerate}
\item[(i)]
The elements $p_j, j\in J$, generate the algebra $A_\Phi(z)$.

\item[(ii)]
For every subset $I=\{i_1,\dots,i_{k-1}\}$ of distinct elements in $J$, we have
\bean
\label{rel 1st type}
F_I(p_1,\dots,p_n)=0.
\eean
Relation \Ref{rel 1st type} will be called the $I$-{\it relation of first kind.}

\item[(iii)]
For every subset $I=\{i_1,\dots,i_{k+1}\}$ of distinct elements in $J$, we have
\bean
\label{rel 2nd type}
F_I(z_1,\dots,z_n,p_1,\dots,p_n)=0.
\eean
Relation \Ref{rel 2nd type} will be called the $I$-{\it relation of second kind.}

\item[(iv)]

In $A_\Phi(z)$, we have
\bean
\label{1}
1=\frac 1{|a|}\sum_{j\in J} z_jp_j.
\eean

\item[(v)]
We  have $\dim A_\Phi(z) = {n-1\choose k}$, and for any $j_1\in J$, the set of monomials $p_{i_1}\dots p_{i_k}$, with  $i_1<\dots<i_k$ and $j_1\notin\{i_1,\dots,i_k\}$,
is a $\C$-basis of $A_\Phi(z)$.

\end{enumerate}
\end{lem}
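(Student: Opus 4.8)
I would establish the items in the order (ii), (iii), (iv), (i), (v), since the later parts lean on the earlier relations. The heart of (ii) is that the critical point equations already furnish the degree-one relations $\sum_{j\in J} b^m_j\>p_j=0$ for $m=1,\dots,k$, because $[\partial\Phi/\partial t_m]=0$ in $\ap$ by the definition of $\mc I(z)$. Expanding the determinant $d_{j,i_1,\dots,i_{k-1}}$ along its $b_j$-row writes it as $\sum_{m=1}^k c^I_m\>b^m_j$ with cofactors $c^I_m$ independent of $j$, whence $F_I(p)=\sum_m c^I_m\sum_{j\in J} b^m_j\>p_j=0$. For (iii) I would substitute $p_{i_\ell}=[a_{i_\ell}/f_{i_\ell}]$ into $F_I$ and factor out $\prod_\ell a_{i_\ell}\big/\prod_\ell f_{i_\ell}$; the surviving bracket is $f_{i_1,\dots,i_{k+1}}(z)+\sum_m(-1)^m d_{i_1,\dots,\widehat{i_m},\dots,i_{k+1}}f_{i_m}$, which is identically zero by the linear identity $\sum_m(-1)^{m-1}d_{i_1,\dots,\widehat{i_m},\dots,i_{k+1}}(z_{i_m}-f_{i_m})=0$; thus (iii) holds already in $\Oc(U(\A(z)))$, before passing to the quotient. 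Finally (iv) is the computation $|a|=\sum_{j\in J}a_j=\sum_{j\in J}a_jf_j/f_j=\sum_{j\in J}a_jz_j/f_j+\sum_{m=1}^k t_m\>\partial\Phi/\partial t_m$, whose last sum lies in $\mc I(z)$; read in $\ap$ this is $|a|=\sum_{j\in J}z_j\>p_j$.

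For (i) I would first note that each $p_j$ is a unit of $\ap$, with inverse $[f_j/a_j]$, since $f_j$ is regular on $U(\A(z))$. The ring $\Oc(U(\A(z)))$ is generated by $t_1,\dots,t_k$ and the $f_j^{-1}$; as $[f_j^{-1}]=a_j^{-1}p_j$, only the $[t_m]$ remain. Choosing a $k$-subset $I_0$ with $d_{I_0}\ne0$ and inverting the linear system $f_i-z_i=\sum_m b^m_i\>t_m$, $i\in I_0$, presents each $t_m$ as a $\C$-combination of the $f_i-z_i$, hence $[t_m]$ as a Laurent polynomial in the invertible $p_i$. Once $\ap$ is known to be finite dimensional, Cayley--Hamilton makes each $p_j^{-1}$ a polynomial in $p_j$, so these Laurent polynomials are ordinary polynomials and the $p_j$ generate.

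The substance is (v). For the upper bound I would use the three families of relations as rewriting rules on monomials in the $p_j$. A first kind relation multiplied by a monomial converts a repeated factor $p_j^2$ into squarefree quadratic monomials, since its coefficient $d_{j,I}$ is nonzero; because $z\notin\Delta$ forces $f_{i_1,\dots,i_{k+1}}(z)\ne0$, a second kind relation expresses any product $p_{i_1}\cdots p_{i_{k+1}}$ as a combination of products of $k$ factors, lowering the degree of a squarefree monomial of degree exceeding $k$; and (iv) together with the first kind relations raises monomials of degree below $k$ and eliminates the index $j_1$. Organizing these reductions by a suitable monomial order shows that the $\binom{n-1}{k}$ monomials $p_{i_1}\cdots p_{i_k}$ with $i_1<\dots<i_k$ and $j_1\notin\{i_1,\dots,i_k\}$ span $\ap$, so $\dim\ap\le\binom{n-1}{k}$; in particular $\ap$ is finite dimensional, which closes the gap left in (i). For the matching lower bound I would invoke the count of critical points of the master function of a generic arrangement: for generic weights $\Phi$ has exactly $\binom{n-1}{k}$ nondegenerate critical points, so $\dim\ap=\binom{n-1}{k}$ there, and upper semicontinuity of the length of the finite critical scheme, combined with the uniform upper bound, propagates $\dim\ap=\binom{n-1}{k}$ to all admissible $z$ and $a$. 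Equality of the dimension with the cardinality of the spanning set forces the latter to be a basis.

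The main obstacle is this last step: the rewriting procedure must be shown to terminate and to be confluent, so that it yields a spanning set of exactly the asserted size rather than a smaller one, and the linear independence of the monomials is not visible from the relations alone---it requires the external input that a generic weighted generic arrangement carries the full number $\binom{n-1}{k}$ of critical points. The hypotheses $a_j\ne0$ and $|a|\ne0$ enter precisely here, in preventing critical points from colliding, escaping to the hyperplanes $f_j=0$, or running off to infinity as the weights degenerate.
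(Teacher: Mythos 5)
Your items (ii)--(iv) are correct and complete: the relations of the first kind are the cofactor expansion of $d_{j,i_1,\dots,i_{k-1}}$ paired with the critical point equations $\sum_{j\in J}b^m_j\,p_j=0$; the relations of the second kind hold already in $\mc O(U(\A(z)))$ after clearing the denominator $\prod_\ell f_{i_\ell}$, by identity \Ref{id 1}; and the Euler-type identity for (iv) is exactly right. Your reduction algorithm for the spanning half of (v) is the same mechanism the paper itself deploys in Lemmas \ref{lem C comb}--\ref{lem polyn comb 2} to prove Theorem \ref{thm main}, and your resolution of the apparent circularity between (i) and (v) is sound: once the subalgebra generated by the $p_j$ is seen to be finite dimensional, each $p_j$, being a non-zero-divisor there (it is a unit of $A_\Phi(z)$ with inverse $[f_j/a_j]$), is invertible inside that subalgebra, so $t_m$ lands in it and the subalgebra is all of $A_\Phi(z)$. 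Be aware that the paper itself supplies no argument for this lemma --- all five parts are citations to \cite{V3} and \cite{V4} --- so what you are really reconstructing are the proofs of those cited statements.

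Where you genuinely diverge from the cited route, and where the one real gap sits, is the lower bound $\dim A_\Phi(z)\ge{n-1\choose k}$ together with the linear independence of the monomials. The sources obtain the dimension as the Euler characteristic $|\chi(U(\A(z)))|$ of the complement, equal to $\dim\sing V={n-1\choose k}$, and obtain independence from the isomorphism $\mu:A_\Phi(z)\to\sing V$ of Theorem \ref{thm iso}, in whose target the images of the chosen monomials are visibly a basis; no deformation of parameters is involved. You instead propose conservation of the number of critical points as $(z,a)$ degenerates from a position where the generic count gives ${n-1\choose k}$ nondegenerate critical points. That can be made to work, but the semicontinuity you invoke only runs in the direction you need (special fiber length at least the generic length) when the critical scheme is \emph{finite, hence proper,} over the parameter space $\{z\notin\Delta,\ a_j\ne0,\ |a|\ne0\}$; if critical points can escape to the hyperplanes or to infinity as the weights degenerate, the length drops and your argument delivers only the upper bound again. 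You correctly identify this as the crux, but naming the obstacle is not the same as removing it: you must either prove properness directly (compactify $\C^k$ and check that $|a|\ne0$ excludes critical points on the divisor at infinity while $a_j\ne0$ excludes them on $H_j(z)$), or abandon the deformation argument in favor of the contravariant-form computation of \cite{V4}. As written, part (v) is a program rather than a proof.
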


Part (i) is Lemma 2.5 in \cite{V3}. Parts (ii), (iii), (iv) are Lemmas 6.7, 6.8, 2.5 in \cite{V4}, respectively.
The first statement of part (v) is \cite[Lemma 4.2]{V3} that follows from   \cite[Lemma 6.5]{V4}.
The second statement of part (v) is  Theorem 6.11 in \cite{V4}.

\smallskip
Note that the polynomials $F_I$ in  \Ref{rel 1st type} and \Ref{rel 2nd type} are homogeneous if we put
\bean
\label{homog}
\deg p_j=1,
\qquad \deg z_j=-1 \qquad \text{for\, all}\ j.
\eean

\subsection{Relations of second kind}
For $j\in J$, denote
\bean
\label{G}
G_j(z_j,p_j) = z_j - a_j/p_j.
\eean
Then the projection to $A_\Phi(z)$ of the left hand side of equation \Ref{id 1} can be written as
\bean
\label{id 2}
G_I(z,p) &=& \sum_{m=1}^{k+1} (-1)^{m-1}  d_{i_1,\dots,\widehat{i_m},\dots,i_{k+1}} G_{i_m}(z_{i_m},p_{i_m})
\phantom{aaaaaaaaaaaaa}
\\
\notag
&=&
\sum_{m=1}^{k+1} (-1)^{m-1}  d_{i_1,\dots,\widehat{i_m},\dots,i_{k+1}} \Big(z_{i_m} - \frac{a_{i_m}}{p_{i_m}}\Big),
\eean
where $I=\{i_1,\dots,i_{k+1}\}$. Hence in $A_\Phi(z)$ we have
\bean
\label{G=0}
G_I(z,p)  = 0.
\eean
Notice that $F_I(z,p) = p_{i_1}\dots p_{i_{k+1}} G_I(z,p)$ and
the functions $p_j $ are nonzero at every point of the critical set of the master function.

\subsection{New presentation for $A_\Phi(z)$}

Fix $z\in \C^n-\Delta$. Consider $(\C^\times)^n$ with coordinates $p_1,\dots,p_n$.
Consider the polynomials $F_I(p)$ in \Ref{rel 1st type} and polynomials $F_I(z,p)$ in \Ref{rel 2nd type} as elements of $\mc O((\C^\times)^n)$.
 Let $\tilde{\mc I}(z)\subset \mc O((\C^\times)^n)$ be the ideal generated by all  $F_I$ with $|I|=k-1,\ k+1$.

Notice that all polynomials $F_I(p)$, $|I|=k-1$, in \Ref{rel 1st type} and all functions $G_I(z,p), |I|=k+1$, in \Ref{id 2}
also generate $\tilde{\mc I}(z)$.

 Let $\tilde A(z) = \mc O((\C^\times)^n)/\tilde{\mc I}(z)$ be the quotient algebra.

\begin{thm}
\label{thm main}
The natural homomorphism   $\tilde A(z)\to A_\Phi(z)$, $p_j\mapsto [a_j/f_j]$,  is an isomorphism.
\end{thm}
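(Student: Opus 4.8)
The plan is to realize $\psi$ as a surjection onto a space whose dimension is already known from part (v) of Lemma~\ref{p properties}, and then force injectivity by a dimension count. First I would check that $\psi$ is well defined. Each $a_j/f_j$ is a unit of $\mc O(U(\A(z)))$, since $f_j$ is nonvanishing on the complement and $a_j\neq0$, so its class $[a_j/f_j]$ is invertible in $A_\Phi(z)$ and the assignment $p_j\mapsto[a_j/f_j]$ extends to an algebra homomorphism out of the Laurent ring $\mc O((\C^\times)^n)$. By parts (ii) and (iii) of Lemma~\ref{p properties} this homomorphism annihilates every generator $F_I$ of $\tilde{\mc I}(z)$, hence descends to $\psi\colon\tilde A(z)\to A_\Phi(z)$. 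Surjectivity is immediate from part (i), since the images $p_j$ generate $A_\Phi(z)$, and part (v) gives $\dim_\C A_\Phi(z)={n-1\choose k}$.

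It then suffices to prove $\dim_\C\tilde A(z)\le{n-1\choose k}$, because a surjection of finite-dimensional vector spaces whose domain has dimension at most that of the codomain is automatically an isomorphism. So the real task is to exhibit a spanning set of $\tilde A(z)$ of cardinality ${n-1\choose k}$, and the natural candidate, dictated by part (v), is the family of standard monomials $p_{i_1}\cdots p_{i_k}$ with $i_1<\dots<i_k$ and a fixed index $j_1$ omitted. I would prove that these span by a normal-form reduction built from three moves. The degree-lowering move is the second-kind relation: since $z\in\C^n-\Delta$ we have $f_{i_1,\dots,i_{k+1}}(z)\neq0$, so \Ref{rel 2nd type} can be solved for its top monomial and rewrites any square-free product of $k+1$ distinct factors as a $\C$-combination of square-free products of $k$ of them, disposing of all degrees above $k$. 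The first-kind relations $\sum_j d_{j,i_1,\dots,i_{k-1}}\,p_j=0$, multiplied by monomials, straighten any repeated factor $p_j^2$ into products of distinct indices and eliminate the omitted index $j_1$, using $d_{j_1,i_1,\dots,i_{k-1}}\neq0$ whenever $j_1\notin\{i_1,\dots,i_{k-1}\}$.

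The third move handles low and negative degrees. Here I would use the relation \Ref{1}, $1=\tfrac1{|a|}\sum_j z_jp_j$, which holds already in $\tilde A(z)$ as a consequence of the two kinds of relations: it is the identity $\sum_j(z_j-a_j/p_j)\,p_j=0$, and the vector $(z_j-a_j/p_j)_{j\in J}$ satisfies the Pl\"ucker combinations \Ref{G=0}, hence lies in the row space of $(b^m_j)$ by Lemma~\ref{plucker}, while $(p_j)_{j\in J}$ lies in its orthogonal complement by the first-kind relations, so the two are orthogonal. Multiplying by \Ref{1} raises $p$-degree and clears negative exponents, pushing every Laurent monomial up to degree exactly $k$, where the previous two moves bring it into standard form.

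The main obstacle is making this reduction rigorous. One must fix a monomial order for which the three moves provably terminate, and in particular confirm that the Laurent (negative-exponent) directions are genuinely removed and that the straightening of squares cannot loop. Granting termination, the standard monomials span, so $\dim_\C\tilde A(z)\le{n-1\choose k}=\dim_\C A_\Phi(z)$; combined with surjectivity this forces $\psi$ to be an isomorphism. Note that no separate proof of linear independence of the standard monomials in $\tilde A(z)$ is required, as it follows a posteriori from the equality of dimensions.
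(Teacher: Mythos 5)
Your proposal is correct and follows essentially the same route as the paper: after well-definedness and surjectivity from parts (i)--(iii) of Lemma \ref{p properties}, the paper's Lemmas \ref{lem 1 in tilde}--\ref{lem polyn comb 2} carry out exactly your three reduction moves (second-kind relations to lower degree past $k$, first-kind relations to straighten squares and omit $j_1$, and relation \Ref{1} to raise degree), and the final paragraph draws the same conclusion from part (v), which is logically identical to your dimension count. The only genuine difference is your derivation of \Ref{1} inside $\tilde A(z)$ by the orthogonality/Pl\"ucker argument, where the paper instead multiplies by the unit $p_1\cdots p_k$ and computes directly; both are valid, and, like the paper, you leave the termination of the straightening procedure (in particular the elimination of negative exponents, for which simply clearing denominators by a unit monomial is the cleanest fix) at the level of a sketch.
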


\begin{example} If $k=1$ and $f_j=t_1+z_j$, then the ideal $\mc I(z)$ is generated by the function
$\sum_{j\in J} a_j/(t_1+z_j)$, while the ideal $\tilde{\mc I}(z)$ is generated by the functions
\bea
p_1+\dots+p_n, \qquad (z_i-z_j)p_ip_j - a_ip_j + a_jp_i, \quad 1\leq i<j\leq n,
\eea
or by the functions
\bea
p_1+\dots+p_n, \qquad (z_i-a_i/p_i)- (z_j-a_j/p_j), \quad 1\leq i<j\leq n.
\eea

\end{example}

\subsection{Proof of Theorem \ref{thm main}}

\begin{lem}
\label{lem 1 f}
Let $I=\{i_1,\dots,i_k\}$ be a subset of distinct elements. Then in $\tilde A(z)$, we have
\bean
\label{1 I}
\sum_{j\in J} z_jp_j = \frac1{d_{i_1,\dots,i_k}} \sum_{j\in J-I}   f_{j,i_1,\dots,i_k}(z)\,p_j.
\eean

\end{lem}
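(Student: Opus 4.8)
The plan is to expand the right-hand side using the explicit formula for $f_{j,i_1,\dots,i_k}(z)$ and then collapse the resulting cross terms with the help of the first-kind relations, which hold in $\tilde A(z)$ by construction. First I would apply the definition of the linear functions $f$ to the $(k+1)$-element set $\{j,i_1,\dots,i_k\}$, placing $j$ in the leading slot, to get
\be
f_{j,i_1,\dots,i_k}(z) = d_{i_1,\dots,i_k}\,z_j + \sum_{\ell=1}^{k} (-1)^{\ell}\, d_{j,i_1,\dots,\widehat{i_\ell},\dots,i_k}\,z_{i_\ell}.
\ee
Substituting this into $\frac1{d_{i_1,\dots,i_k}}\sum_{j\in J-I} f_{j,i_1,\dots,i_k}(z)\,p_j$ and separating the diagonal contribution gives
\be
\sum_{j\in J-I} z_j p_j + \sum_{\ell=1}^{k} \frac{(-1)^{\ell} z_{i_\ell}}{d_{i_1,\dots,i_k}} \Big( \sum_{j\in J-I} d_{j,i_1,\dots,\widehat{i_\ell},\dots,i_k}\,p_j \Big).
\ee

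The key step is to evaluate the inner sum modulo $\tilde{\mc I}(z)$. I would write $\sum_{j\in J-I}=\sum_{j\in J}-\sum_{j\in I}$. The full sum $\sum_{j\in J} d_{j,i_1,\dots,\widehat{i_\ell},\dots,i_k}\,p_j$ is exactly the first-kind relation associated with the $(k-1)$-element subset $\{i_1,\dots,\widehat{i_\ell},\dots,i_k\}$, hence it vanishes in $\tilde A(z)$. In the remaining sum over $j\in I$, every term with $j=i_{\ell'}$, $\ell'\neq\ell$, carries a Pl\"ucker coordinate with a repeated index and therefore vanishes by antisymmetry of $d$; only $j=i_\ell$ survives, contributing $d_{i_\ell,i_1,\dots,\widehat{i_\ell},\dots,i_k}=(-1)^{\ell-1}d_{i_1,\dots,i_k}$ after moving $i_\ell$ back into its $\ell$-th slot. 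Thus in $\tilde A(z)$ the inner sum equals $-(-1)^{\ell-1}d_{i_1,\dots,i_k}\,p_{i_\ell}=(-1)^{\ell}d_{i_1,\dots,i_k}\,p_{i_\ell}$.

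Feeding this back, the two factors $(-1)^{\ell}$ combine to $+1$ and $d_{i_1,\dots,i_k}$ cancels, so the cross term collapses to $\sum_{\ell=1}^{k} z_{i_\ell} p_{i_\ell}=\sum_{j\in I} z_j p_j$. Adding the diagonal part recovers $\sum_{j\in J} z_j p_j$, which is the asserted identity; recall that $d_{i_1,\dots,i_k}\neq 0$ by the genericity assumption, so the division is legitimate. The only delicate point is the sign bookkeeping: one must check that the $(-1)^{\ell-1}$ produced by reordering the Pl\"ucker coordinate cancels against the explicit $(-1)^{\ell}$ coming from the expansion of $f_{j,i_1,\dots,i_k}$. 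Everything else is a direct application of the first-kind relation together with the vanishing of $d$ on repeated indices.
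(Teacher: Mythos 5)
Your proof is correct and uses the same essential ingredient as the paper's, namely the first-kind relations $\sum_{j\in J} d_{j,i_1,\dots,\widehat{i_\ell},\dots,i_k}\,p_j=0$ together with the vanishing of Pl\"ucker coordinates with repeated indices; you simply run the computation from the right-hand side to the left, whereas the paper substitutes for $p_{i_\ell}$, $i_\ell\in I$, on the left-hand side and illustrates the bookkeeping only in the case $k=2$. Your sign checks ($(-1)^{\ell}$ from the expansion of $f_{j,i_1,\dots,i_k}$ against $(-1)^{\ell-1}$ from reordering $d_{i_\ell,i_1,\dots,\widehat{i_\ell},\dots,i_k}$) are accurate, so the argument is a complete general version of the paper's sketch.
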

\begin{proof}
The statement easily follows from \Ref{rel 1st type}, that is, from relations of first kind. For example, if $k=2$ and $I=\{1,2\}$, then the two relations of first kind
$p_1=\frac 1{d_{2,1}}\sum_{j>2} d_{j,2}p_j$ and
$p_2=\frac 1{d_{1,2}}\sum_{j>2} d_{j,1}p_j$
transform $\sum_{j\in J} z_jp_j$ to $\frac1{d_{1,2}} \sum_{j>2}   f_{1,2,j}(z) p_j$.
\end{proof}

\begin{lem}
\label{lem 1 in tilde}
In $\tilde A(z)$, we have
$1= \frac 1{|a|}\sum_{j\in J} z_jp_j$.
\end{lem}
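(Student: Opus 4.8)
The plan is to prove the identity directly inside $\tilde A(z)$ from the generators of $\tilde{\mc I}(z)$, without invoking Lemma \ref{p properties}(iv): the latter is the same statement in $A_\Phi(z)$, and using it here would be circular, since Lemma \ref{lem 1 in tilde} is a step toward the isomorphism of Theorem \ref{thm main}. The natural starting point is Lemma \ref{lem 1 f}. Fix any $k$-element subset $I=\{i_1,\dots,i_k\}$ and abbreviate $d_I=d_{i_1,\dots,i_k}\neq0$. Equation \Ref{1 I} already rewrites $\sum_{j\in J}z_jp_j$ as $\frac1{d_I}\sum_{j\in J-I}f_{j,i_1,\dots,i_k}(z)\,p_j$, a sum over indices outside $I$. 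Since $|a|\neq0$, it suffices to prove that this right-hand side equals $|a|$, i.e.\ that $\sum_{j\in J-I}f_{j,i_1,\dots,i_k}(z)\,p_j=d_I\,|a|$.

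To bring in the weights $a_j$, I would feed each coefficient $f_{j,i_1,\dots,i_k}(z)$ through a relation of second kind. For $j\in J-I$ the set $\{j,i_1,\dots,i_k\}$ has $k+1$ elements, so the relation of second kind, written in the $G$-form \Ref{id 2}, holds in $\tilde A(z)$ (the $G_I$ with $|I|=k+1$ generate $\tilde{\mc I}(z)$). Expanding \Ref{id 2} for the ordered set $(j,i_1,\dots,i_k)$ and separating the $z$-part from the $a/p$-part, the $z$-part is precisely $f_{j,i_1,\dots,i_k}(z)$, so $G_{\{j\}\cup I}(z,p)=0$ gives
\[
f_{j,i_1,\dots,i_k}(z)=d_I\,\frac{a_j}{p_j}+\sum_{\ell=1}^{k}(-1)^{\ell}\,d_{j,i_1,\dots,\widehat{i_\ell},\dots,i_k}\,\frac{a_{i_\ell}}{p_{i_\ell}}.
\]
Multiplying by $p_j$ (legitimate, as the $p_j$ are invertible on $(\C^\times)^n$) and summing over $j\in J-I$ turns the target sum into $d_I\sum_{j\in J-I}a_j$ plus cross terms $\sum_{\ell}(-1)^{\ell}\frac{a_{i_\ell}}{p_{i_\ell}}\sum_{j\in J-I}d_{j,i_1,\dots,\widehat{i_\ell},\dots,i_k}\,p_j$.

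The cross terms are then eliminated by the relations of first kind. For each fixed $\ell$ the $(k-1)$-set $I_\ell=\{i_1,\dots,\widehat{i_\ell},\dots,i_k\}$ gives $\sum_{j\in J}d_{j,I_\ell}\,p_j=0$; since $d_{j,I_\ell}=0$ whenever $j\in I_\ell$, only the indices $j\in(J-I)\cup\{i_\ell\}$ contribute, whence $\sum_{j\in J-I}d_{j,I_\ell}\,p_j=-d_{i_\ell,I_\ell}\,p_{i_\ell}=(-1)^{\ell}d_I\,p_{i_\ell}$, the last step being the sign from moving $i_\ell$ back into its natural position in $d_{i_1,\dots,i_k}$. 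Substituting this, each cross term becomes $(-1)^{\ell}\frac{a_{i_\ell}}{p_{i_\ell}}\cdot(-1)^{\ell}d_I\,p_{i_\ell}=d_I\,a_{i_\ell}$, so the cross sum collapses to $d_I\sum_{\ell=1}^k a_{i_\ell}$. Adding the diagonal contribution $d_I\sum_{j\in J-I}a_j$ yields $d_I\sum_{j\in J}a_j=d_I\,|a|$, and dividing by $d_I$ completes the proof.

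The only place demanding genuine care is the sign bookkeeping for the Pl\"ucker coordinates: one must track the factor $(-1)^{\ell}$ coming from the definition of $f_{j,i_1,\dots,i_k}$, the factor $(-1)^{\ell-1}$ arising when reordering $d_{i_\ell,I_\ell}$ into $d_{i_1,\dots,i_k}$, and confirm that their product is $+1$, so that every index $\ell$ contributes additively to $d_I\sum_{\ell}a_{i_\ell}$ rather than cancelling. The structural subtlety, as noted, is to remain entirely within $\tilde A(z)$, using only the first- and second-kind relations (the generators of $\tilde{\mc I}(z)$); no other input is required.
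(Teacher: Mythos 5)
Your proof is correct and follows essentially the same route as the paper's: Lemma \ref{lem 1 f} first, then the second-kind relations to convert each $f_{j,i_1,\dots,i_k}(z)\,p_j$ into weight terms, then the first-kind relations to collapse the cross terms to $d_I\sum_{\ell}a_{i_\ell}$, giving $d_I\,|a|$ in total. The only cosmetic difference is that you use the $G$-form of the second-kind relations (dividing by the invertible $p$'s term by term and working with a general $I$), whereas the paper multiplies the whole identity by $p_1\cdots p_k$, works with the polynomial relations $F_I$, and cancels that invertible factor at the end; your sign bookkeeping checks out.
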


\begin{proof}
We have
\bea
&&
p_1\dots p_k \sum_{j\in J} z_jp_j = p_1\dots p_k \frac1{d_{1,\dots,k}} \sum_{j>k}   f_{j,1,\dots,k}(z) p_j
\\
&&
\phantom{aaa}
=   \sum_{j>k} \big[a_jp_1\dots p_k + \sum_{m=1}^{k} (-1)^{m} a_{m} \frac{ d_{j,1,\dots,\widehat{m},\dots,k}}{d_{1,\dots,k}}p_j p_{1}\dots \widehat{p_{m}}\dots p_{k} \big]
= |a| \,p_1\dots p_k ,
\eea
where the first equality follows from Lemma \ref{lem 1 f}, the second equality follows from the relations of second kind, the third equality follows from the relations of first kind.
Denote by $C(z)\subset (\C^\times)^n$ the zero set of the ideal $\tilde{\mc I}(z)$. Then the function $p_1\dots p_k$ is nonvanishing on $C(z)$. The previous calculation
shows that the multiplication of the invertible function $p_1\dots p_k$ by $\frac 1{|a|}\sum_{j\in J} z_jp_j$ does not change the invertible function. This gives the lemma.
\end{proof}

\begin{lem}
\label{lem C comb}
Let $s\leq k$ be a natural number and $M=\prod_{j\in J} p_j^{s_j}$, \ $\sum_{j\in J}s_j = s$, a monomial of degree $s$.
Let $J_{k-s+1}=\{j_1,\dots,j_{k-s+1}\}$ be any subset in $J$ with distinct elements. Then by using  the relations of
first kind only, the monomial $M$ can be represented as a $\C$-linear combination of monomials $p_{i_1}\dots p_{i_s}$ with $1\leq i_1 <\dots<i_s\leq n$
and $\{ i_1,\dots,i_s\}\cap J_{k-s+1}=\emptyset$.
\qed
\end{lem}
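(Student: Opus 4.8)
The plan is to induct on the degree $s$, and the one thing to watch throughout is keeping the reduced monomials \emph{squarefree} while we purge the indices lying in $J_{k-s+1}$, using nothing but the linear relations \Ref{rel 1st type}. First I would record the single elementary move they supply. For a $(k-1)$-element subset $I\subset J$ and an index $\ell\notin I$, the relation $F_I(p)=0$ reads $\sum_{j\in J-I}d_{j,I}\,p_j=0$, and $d_{\ell,I}\ne0$ because $\{\ell\}\cup I$ is a set of $k$ distinct indices, so by the genericity hypothesis one may solve $p_\ell=-\,d_{\ell,I}^{-1}\sum_{j\in J-I-\{\ell\}}d_{j,I}\,p_j$. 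I would then choose $I\supseteq J_{k-s+1}-\{\ell\}$, filled out to size $k-1$ by indices from $J-J_{k-s+1}$; this forces $J-I-\{\ell\}\subseteq J-J_{k-s+1}$, so the move rewrites $p_\ell$ as a $\C$-combination of the $p_j$ with all $j\notin J_{k-s+1}$.

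With this move in hand the induction runs as follows. For $s=1$ the forbidden set $J_{k-s+1}=J_k$ has $k$ elements, and if $M=p_\ell$ with $\ell\in J_k$ the move (applied with the cofactor $M/p_\ell$ empty) already puts $M$ in the required form, while if $\ell\notin J_k$ there is nothing to do. For the inductive step I would write $M=p_\ell M'$ with $\deg M'=s-1$ and separate two cases. If $M$ has a factor $p_\ell$ with $\ell\notin J_{k-s+1}$, I apply the inductive hypothesis to $M'$ with the \emph{enlarged} forbidden set $J_{k-s+1}\cup\{\ell\}$, which has size $k-(s-1)+1$, obtaining $M'$ as a combination of squarefree degree-$(s-1)$ monomials whose index sets avoid $J_{k-s+1}\cup\{\ell\}$; multiplying back by $p_\ell$ keeps each term squarefree, since $\ell$ was excluded, and disjoint from $J_{k-s+1}$.

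If instead every factor of $M$ lies in $J_{k-s+1}$, I first spend one application of the elementary move to replace a factor $p_\ell$ by a combination of $p_j$ with $j\notin J_{k-s+1}$. This turns $M$ into a $\C$-combination of monomials $p_j\,(M/p_\ell)$, each of which now does carry a factor outside $J_{k-s+1}$, so the previous case applies to each of them and finishes the job. Thus the only genuinely new ingredient at each degree is the escape move, and the rest is bookkeeping.

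The crux, and the only delicate point, is the balance $s+(k-s+1)=k+1$ between the degree and the size $k-s+1$ of the forbidden set: when the degree drops by one the forbidden set must grow by exactly one, and that extra slot is precisely what absorbs the peeled-off index $\ell$ and so guarantees squarefreeness once $p_\ell$ is multiplied back in. I expect this interplay — rather than any computation — to be where care is needed; the genericity assumption $d_{i_1,\dots,i_k}\ne0$ enters only to make the coefficient $d_{\ell,I}$ invertible in the elementary move.
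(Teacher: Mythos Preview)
Your argument is correct. The induction on $s$ with the enlarged forbidden set $J_{k-s+1}\cup\{\ell\}$ at the inductive step is exactly the right bookkeeping, and your ``elementary move'' is well-defined: when $\ell\in J_{k-s+1}$ the set $J_{k-s+1}-\{\ell\}$ has $k-s\le k-1$ elements, and since $|J- J_{k-s+1}|=n-k+s-1\ge s-1$ there is room to fill $I$ out to size $k-1$ without touching $\ell$; the genericity hypothesis then makes $d_{\ell,I}$ invertible. The two cases in the inductive step cover all monomials, and in Case~2 each resulting term $p_j(M/p_\ell)$ indeed falls under Case~1 since $j\notin J_{k-s+1}$.

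As for comparison with the paper: the paper does not give a proof at all. The lemma is stated with a trailing \qed\ and the remark ``C.f.\ the proof of Lemma~6.9 in~\cite{V4}''. Your write-up supplies the omitted details, and the inductive mechanism you describe --- peel off one factor, enlarge the forbidden set by one to preserve the balance $s+|J_{k-s+1}|=k+1$, recurse --- is the natural argument and is presumably what the cited lemma does as well.
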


C.f. the proof of Lemma 6.9 in \cite{V4}.

\begin{lem}
\label{lem polyn comb 1}
Let $s\leq k$ be a natural number and $M=\prod_{j\in J} p_j^{s_j}$
a monomial of degree $s$.
Fix and element $j_1\in J$.  Then by using  the relations of
first kind and the relation $1= \frac 1{|a|}\sum_{j\in J} z_jp_j$ only, the monomial $M$ can be represented as a linear combination of monomials $p_{i_1}\dots p_{i_k}$ with $1\leq i_1 <\dots<i_k\leq n$
and $j_1\notin \{ i_1,\dots,i_s\}$, where the coefficients of the linear combination are homogeneous polynomials in $z$ of degree $s-k$.
\qed
\end{lem}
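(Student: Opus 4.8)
The plan is to argue by descending induction on $s$, i.e.\ induction on $k-s\ge 0$, raising the degree of the monomial one unit at a time. The mechanism for raising the degree is to multiply by $1$ in the form supplied by Lemma \ref{lem 1 in tilde}, namely $1=\frac1{|a|}\sum_{j\in J}z_jp_j$; this trades a factor of degree $0$ for a sum of degree-$1$ monomials $p_j$ weighted by the degree-$(-1)$ coefficients $z_j/|a|$, after which the inductive hypothesis disposes of the resulting monomials of degree $s+1$.

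For the base case $s=k$ the required coefficients have degree $s-k=0$, i.e.\ they are constants, and the statement is exactly Lemma \ref{lem C comb} applied with the singleton $J_{k-s+1}=J_1=\{j_1\}$: that lemma expresses $M$, using relations of first kind alone, as a $\C$-linear combination of monomials $p_{i_1}\dots p_{i_k}$ with $i_1<\dots<i_k$ and $\{i_1,\dots,i_k\}\cap\{j_1\}=\emptyset$. For the inductive step, suppose the assertion holds in degree $s+1$ and let $M$ have degree $s<k$. I would write
\[
M=M\cdot 1=\frac1{|a|}\sum_{j\in J}z_j\,(p_jM).
\]
Each $p_jM$ is a monomial of degree $s+1\le k$, so by the inductive hypothesis (with the same distinguished index $j_1$) it equals a linear combination of monomials $p_{i_1}\dots p_{i_k}$ with $i_1<\dots<i_k$ and $j_1\notin\{i_1,\dots,i_k\}$, the coefficients being homogeneous in $z$ of degree $(s+1)-k$. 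Multiplying by $z_j$ and summing over $j\in J$ then yields $M$ as a linear combination of the same admissible monomials, and since $\deg z_j=-1$ each coefficient now has degree $(s+1)-k-1=s-k$, as required. Only relations of first kind (through the base case and the inductive hypothesis) and the relation $1=\frac1{|a|}\sum_{j\in J}z_jp_j$ are used.

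The one point to keep straight is the degree bookkeeping under the grading $\deg p_j=1$, $\deg z_j=-1$ of \Ref{homog}. Since $s\le k$, a homogeneous polynomial in $z$ of degree $s-k$ is simply an ordinary polynomial of ordinary degree $k-s$, so the claim is meaningful and the total expression $\sum c_{\vec i}(z)\,p_{i_1}\dots p_{i_k}$ is homogeneous of degree $s$, matching $\deg M=s$. I do not expect a genuine obstacle: the substantive inputs are already available as Lemma \ref{lem C comb} (the base case) and Lemma \ref{lem 1 in tilde} (the degree-raising identity), and the only things to check are that $s+1\le k$ permits invoking the hypothesis in the inductive step (true because $s<k$) and that the singleton choice $J_1=\{j_1\}$ makes the base case coincide verbatim with Lemma \ref{lem C comb}.
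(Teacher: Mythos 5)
Your proof is correct. The paper states this lemma with no proof (it is marked \qed), so there is nothing to compare against line by line, but your argument --- descending induction on $s$, with Lemma \ref{lem C comb} applied to the singleton $\{j_1\}$ as the base case $s=k$, and multiplication by $1=\frac1{|a|}\sum_{j\in J}z_jp_j$ to raise the degree in the inductive step --- is exactly the intended mechanism, and your degree bookkeeping under the grading $\deg p_j=1$, $\deg z_j=-1$ checks out.
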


Recall the $\deg z_j=-1$ for all $j\in J$.

\begin{lem}
\label{lem polyn comb 2}
Let $s > k$ be a natural number and $M=\prod_{j\in J} p_j^{s_j}$
a monomial of degree $s$.
 Then by using  the relations of
first kind and  second kinds, the monomial $M$ can be represented as a linear combination of monomials $p_{i_1}\dots p_{i_k}$ of degree $k$,
where the coefficients of the linear combination are rational functions  in $z$, regular on $\C^n-\Delta$ and homogeneous of degree  $s-k$.
\qed
\end{lem}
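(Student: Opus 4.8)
The plan is to argue by induction on the degree $s$, with the entire argument resting on a single reduction step that lowers degree $k+1$ to degree $k$. For the inductive step, suppose $s>k+1$ and write $M=p_a\,M'$, where $p_a$ is one of the factors of $M$ and $M'$ is a monomial of degree $s-1>k$. By the induction hypothesis $M'=\sum_\beta c_\beta(z)\,p_\beta$, where the $p_\beta$ are monomials of degree $k$ and the $c_\beta$ are rational functions of $z$, regular on $\C^n-\Delta$ and homogeneous of degree $(s-1)-k$. Then $M=\sum_\beta c_\beta(z)\,p_a p_\beta$, and each $p_a p_\beta$ is a monomial of degree $k+1$; applying the degree $k+1\to k$ step to each of them yields the desired expression for $M$, the homogeneity weights adding up to $(s-1-k)+1=s-k$ and a product of functions regular on $\C^n-\Delta$ being again regular there. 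Thus it suffices to treat the base case $s=k+1$. (Note that only relations of first and second kinds enter, and not the relation $1=\tfrac1{|a|}\sum_j z_jp_j$, in accordance with the statement.)

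For the base case I first dispose of squarefree monomials. If $M=p_{i_1}\dots p_{i_{k+1}}$ with distinct indices, then the $I$-relation of second kind \Ref{rel 2nd type} rearranges to
\be
p_{i_1}\dots p_{i_{k+1}}=\frac{1}{f_{i_1,\dots,i_{k+1}}(z)}
\sum_{m=1}^{k+1}(-1)^{m-1}a_{i_m}d_{i_1,\dots,\widehat{i_m},\dots,i_{k+1}}\,
p_{i_1}\dots\widehat{p_{i_m}}\dots p_{i_{k+1}},
\ee
which expresses $M$ through squarefree degree $k$ monomials with coefficients rational in $z$, homogeneous of degree $1$, and regular on $\C^n-\Delta$ since $f_{i_1,\dots,i_{k+1}}(z)\ne0$ there by the definition of $\Delta$. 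For a general, possibly non-squarefree, monomial $M$ of degree $k+1$, I would choose an index $j_0$ in the support of $M$ and write $M=p_{j_0}\,(M/p_{j_0})$ with $M/p_{j_0}$ of degree $k$. By Lemma \ref{lem C comb}, applied with $s=k$ and the one-element set $J_1=\{j_0\}$, the monomial $M/p_{j_0}$ is a $\C$-linear combination of squarefree monomials $p_{i_1}\dots p_{i_k}$ avoiding $j_0$, obtained using first-kind relations only, whose coefficients are constants. Hence $M$ is a constant-coefficient combination of squarefree degree $k+1$ monomials $p_{j_0}p_{i_1}\dots p_{i_k}$, to each of which the squarefree case above applies.

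The substantive point to monitor is the regularity of the coefficients on all of $\C^n-\Delta$, not merely generically, and this is where care is needed. The only denominators ever introduced are the linear forms $f_{i_1,\dots,i_{k+1}}(z)$ arising from second-kind relations, and because Lemma \ref{lem C comb} converts to squarefree monomials first, these relations are applied only to distinct indices; thus each such $f_{i_1,\dots,i_{k+1}}$ is one of the forms cutting out $\Delta$ and is nonvanishing off $\Delta$. The first-kind relations invoked through Lemma \ref{lem C comb} contribute only constant Plücker-ratio coefficients and no new poles, and the induction multiplies together finitely many functions regular on $\C^n-\Delta$, which stays regular there. Finally, the weights $\deg p_j=1,\ \deg z_j=-1$ are preserved by both kinds of relations, so a degree $s$ monomial necessarily reduces to degree $k$ monomials with coefficients homogeneous of degree $s-k$, as claimed.
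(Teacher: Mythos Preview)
Your argument is correct and complete. The paper itself records this lemma with a bare \qed\ and no proof, so you have supplied exactly the details the author omitted; your induction on $s$ together with the reduction of non-squarefree degree $k+1$ monomials via Lemma~\ref{lem C comb} before invoking the second-kind relation is the natural route and is presumably what was intended.
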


Let us finish the proof of Theorem \ref{thm main}. Let  $P(p_1,\dots,p_n)$ be a polynomial. Fix $j_1\in J$.
By using the relations of first and second kinds only, the polynomial can be represented as a linear combination $\tilde P$
of monomials $p_{i_1}\dots p_{i_k}$ with $1\leq i_1 <\dots<i_k\leq n$
and $j_1\notin \{ i_1,\dots,i_s\}$, see Lemmas \ref{lem C comb} - \ref{lem polyn comb 2}.
Assume that $P(p_1,\dots,p_n)$ projects to zero in $A_\Phi(z)$, then all coefficients of that linear combination $\tilde P$ must be
zero, see part (v) of Lemma \ref{p properties}. This means that $P$ lies in the ideal $\tilde{\mc I}(z)$. Theorem
\ref{thm main} is proved.

\section{Lagrangian variety of the master function}
\label{ sec Lagrangian var function}

\subsection{Critical set}

Recall the projection $\pi :\C^n\times\C^k\to\C^n$. For any $z\in\C^n-\Delta$, the arrangement $\A(z)$ in $\pi^{-1}(z)$ has normal crossings.
Recall   the complement $U(\tilde \A)\subset \C^n\times\C^k$ to the arrangement $\tilde A$ in  $\C^n\times\C^k$.  Denote
\bean
\label{U}
U^0= U(\tilde \A) \cap \pi^{-1}(\C^n-\Delta) \subset \C^n\times\C^k.
\eean
Consider the master function $\Phi(z,t)$, defined in \Ref{def mast k}, as a function on $U^0$.
Denote by $C_\Phi$ the critical set of $\Phi$ with respect to variables $t$,
\bean
\label{crit big}
C_\Phi=\{(z,t)\in U^0\ |\ \der \Phi/\der t_i(z,t)=0,\ {} i=1,\dots,k\}.
\eean

\begin{lem}
\label{crit smooth}
The set $C_\Phi$ is a smooth $n$-dimensional subvariety of $\UT$.
\end{lem}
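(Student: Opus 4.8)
The plan is to show that $C_\Phi$ is cut out inside the smooth variety $U^0$ by $k$ equations whose differentials are linearly independent at every point of $C_\Phi$, so that $C_\Phi$ is a smooth complete intersection of dimension $n+k-k=n$. The defining equations are the $k$ critical-point equations $\der\Phi/\der t_i=\sum_{j\in J}b^i_j\,a_j/f_j=0$, $i=1,\dots,k$, from \Ref{CR}. Since $U^0$ is open in $\C^n\times\C^k$ (it is the intersection of the complement $U(\tilde\A)$ with the preimage of the open set $\C^n-\Delta$, hence a $Zariski$-open subset of an affine space), it is smooth of dimension $n+k$. Thus it suffices to verify that at each $(z,t)\in C_\Phi$ the $k\times(n+k)$ Jacobian matrix of the functions $\der\Phi/\der t_1,\dots,\der\Phi/\der t_k$ has rank $k$.

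First I would exhibit a $k\times k$ minor of this Jacobian that is nonvanishing on $C_\Phi$, using differentiation in the $z$-variables rather than the $t$-variables. Because $f_j=z_j+g_j$ depends on $z_j$ with $\der f_j/\der z_j=1$, we have $\der(\der\Phi/\der t_i)/\der z_j=-b^i_j\,a_j/f_j^{\,2}$. Hence the submatrix of partials with respect to, say, $z_{i_1},\dots,z_{i_k}$ for a fixed $k$-subset $\{i_1,\dots,i_k\}\subset J$ equals $-\big(b^i_{i_\ell}\,a_{i_\ell}/f_{i_\ell}^{\,2}\big)_{i,\ell}$, whose determinant factors as $(-1)^k\,d_{i_1,\dots,i_k}\prod_{\ell=1}^k a_{i_\ell}/f_{i_\ell}^{\,2}$. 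Since the weights $a_j$ are nonzero, the functions $f_j$ are nonvanishing on $U^0$ (by definition of the complement), and the genericity hypothesis guarantees $d_{i_1,\dots,i_k}\ne0$ for distinct indices, this minor is nowhere zero on $U^0$, in particular on $C_\Phi$. Therefore the Jacobian has full rank $k$ everywhere on $C_\Phi$.

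By the Jacobian criterion (the regular value / implicit function theorem in the algebraic or holomorphic category), the common zero locus $C_\Phi$ of these $k$ functions is a smooth subvariety of $U^0$ of codimension $k$, hence of dimension $(n+k)-k=n$, which is the assertion. I expect the only real content to be the computation of the minor and the observation that the relevant Plücker coordinate appears as its determinant; the rest is a routine application of the Jacobian criterion. The main point to be careful about is that the nonvanishing of the minor must be checked at points of $C_\Phi$ itself, but since the minor is visibly nonzero on all of $U^0$ (no appeal to the critical-point equations is needed), this causes no difficulty.
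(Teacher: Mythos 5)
Your proposal is correct and follows essentially the same route as the paper: the paper's proof consists precisely of exhibiting the $k\times k$ minor $\det_{l,m}\big(\der^2\Phi/\der t_l\der z_{i_m}\big)=\pm\, d_{i_1,\dots,i_k}\prod_m a_{i_m}/f_{i_m}^2$, which is nonvanishing on $U^0$, and invoking the implicit function theorem. Your write-up just spells out the Jacobian-criterion bookkeeping (and gets the sign $(-1)^k$ more carefully), which the paper leaves implicit.
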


\begin{proof}
For any subset $I=\{1\leq i_1<\dots<i_k\leq n\}\subset J$, the  $k\times k$-determinant
\bea
\on{det}_{l,m=1}^k \Big(\frac{\der^2\Phi}{\der t_l\der z_{j_m}}\Big)\,= \,- d_{i_1,\dots,i_k} \prod_{m=1}^k \frac{a_{j_m}}{f^2_{j_m}(z,t)}
\eea
is nonzero on $\UT$.
\end{proof}

Denote by $\mc I\subset \mc O(U^0)$ the ideal generated by the functions $\der \Phi/\der t_j$, $j\in J$.
The algebra of functions on $C_\Phi$ is the quotient algebra
\bean
A_\Phi  =  \mc O(U^0)/\mc I.
\eean
Consider $(\C^n-\Delta)\times (\C^\times)^n$ with coordinates $z_1,\dots,z_n, p_1,\dots,p_n$.
Consider the polynomials $F_I(p)$ in \Ref{rel 1st type} and polynomials $F_I(z,p)$ in \Ref{rel 2nd type} as elements of $\mc O((\C^n-\Delta)\times (\C^\times)^n)$.
 Let $\tilde{\mc I}\subset \mc O((\C^n-\Delta)\times (\C^\times)^n)$ be the ideal generated by all  $F_I$ with $|I|=k-1,\ k+1$.
Notice that all polynomials $F_I(p)$, $|I|=k-1$, in \Ref{rel 1st type} and all functions $G_I(z,p), |I|=k+1$, in \Ref{id 2}
also generate $\tilde{\mc I}(z)$.  Let
 \bean
 \label{tilde A}
 \tilde A = \mc O((\C^n-\Delta)\times (\C^\times)^n)/\tilde{\mc I}
 \eean
  be the quotient algebra.

\begin{thm}
\label{thm main 2}
The natural homomorphism   $\tilde A \to A_\Phi $, $p_j\mapsto [a_j/f_j]$,  is an isomorphism.

\end{thm}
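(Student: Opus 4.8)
The statement is the relative version, over the base $\C^n-\Delta$, of Theorem \ref{thm main}, so the plan is to run the same argument with $z_1,\dots,z_n$ now regarded as coordinate functions rather than fixed constants. First I would check that the map is well defined, i.e. that the generators $F_I$ (with $|I|=k-1,\ k+1$) of $\tilde{\mc I}$ are carried into $\mc I$. For $|I|=k-1$ this is the relation of first kind $\sum_{j\in J} d_{j,i_1,\dots,i_{k-1}}[a_j/f_j]=0$, and for $|I|=k+1$ it is the relation of second kind; both are the family analogues of parts (ii),(iii) of Lemma \ref{p properties} and hold as identities in $A_\Phi=\mc O(U^0)/\mc I$, because the underlying identities \Ref{id 1}, \Ref{Plu} and the critical point equations \Ref{CR} are valid uniformly in $z$. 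This produces an $\mc O(\C^n-\Delta)$-algebra homomorphism $\tilde A\to A_\Phi$ sending $z_j\mapsto z_j$ and $p_j\mapsto[a_j/f_j]$.

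For surjectivity, note that $\mc O(U^0)$ is generated over $\mc O(\C^n-\Delta)$ by $t_1,\dots,t_k$ and the units $1/f_1,\dots,1/f_n$, so it suffices to place the classes of the $t_m$ in the image. On $C_\Phi$ one has $f_j=a_j/p_j$, hence $\sum_{m=1}^k b^m_j[t_m]=a_j/p_j-z_j$; inverting a $k\times k$ minor $(b^m_j)_{j\in I_0}$ with $d_{I_0}\ne0$ expresses each $[t_m]$ as a polynomial in the $z_j$ and the units $p_j^{-1}$, all of which lie in the image. Thus the map is onto.

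Injectivity is the heart of the matter, and I would obtain it exactly as in the proof of Theorem \ref{thm main}. Since $p_1\cdots p_n$ is a unit in $\mc O((\C^n-\Delta)\times(\C^\times)^n)$, multiplying by a suitable power of it lets me assume any class in $\tilde A$ is represented by a genuine polynomial $P$ in $p$ with coefficients in $\mc O(\C^n-\Delta)$. Lemmas \ref{lem C comb}--\ref{lem polyn comb 2} are formal consequences of the relations of first and second kind together with $1=\frac1{|a|}\sum_{j\in J}z_jp_j$ (Lemma \ref{lem 1 in tilde}), all of which hold in $\tilde A$; crucially, the coefficients they produce are rational in $z$ and regular on all of $\C^n-\Delta$. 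Applying them with a fixed $j_1\in J$ reduces $P$ modulo $\tilde{\mc I}$ to a normal form $\tilde P=\sum c_I(z)\,p_{i_1}\cdots p_{i_k}$, the sum over $I=\{i_1<\dots<i_k\}$ with $j_1\notin I$ and $c_I\in\mc O(\C^n-\Delta)$.

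Finally, suppose $P$ maps to $0$ in $A_\Phi$; then so does $\tilde P$, and I must conclude $c_I\equiv0$. Here I would pass to fibres: for each $z_0\in\C^n-\Delta$, base change along $\mc O(\C^n-\Delta)\to\C$, $z\mapsto z_0$, identifies $A_\Phi\otimes\C$ with $A_\Phi(z_0)$, because $U^0$ is smooth over the base (so the scheme-theoretic fibre over $z_0$ is reduced and equals $U(\A(z_0))$) and the generators $\der\Phi/\der t_j$ of $\mc I$ restrict to the generators of $\mc I(z_0)$. By the fibrewise basis statement, part (v) of Lemma \ref{p properties}, the monomials $p_{i_1}\cdots p_{i_k}$ with $j_1\notin\{i_1,\dots,i_k\}$ form a $\C$-basis of $A_\Phi(z_0)$, so the relation $\sum_I c_I(z_0)\,p_{i_1}\cdots p_{i_k}=0$ forces $c_I(z_0)=0$; since $z_0$ is arbitrary, $c_I\equiv0$, whence $\tilde P=0$ and $P\in\tilde{\mc I}$. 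The main obstacle I anticipate is the bookkeeping of regularity of coefficients: one must verify that the reductions never introduce poles along $\Delta$ (which is exactly what the statements of Lemmas \ref{lem polyn comb 1} and \ref{lem polyn comb 2} guarantee) and that the fibrewise specialization of $A_\Phi$ is literally $A_\Phi(z_0)$ rather than merely surjecting onto it, as it is this equality that licenses the passage from pointwise to $\mc O(\C^n-\Delta)$-linear independence.
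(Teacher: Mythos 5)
Your proposal is correct and follows the paper's route: the paper proves Theorem \ref{thm main 2} simply by declaring the proof to be the same as that of Theorem \ref{thm main}, and your write-up is exactly that argument carried out in families over $\C^n-\Delta$, using Lemmas \ref{lem C comb}--\ref{lem polyn comb 2} (whose coefficients are already stated to be rational in $z$ and regular on $\C^n-\Delta$) to reach the normal form, and then the fibrewise basis of part (v) of Lemma \ref{p properties} to force the coefficients to vanish pointwise and hence identically. The extra checks you supply --- well-definedness, surjectivity via the generators $t_m$ and $1/f_j$ of $\mc O(U^0)$, and the specialization to fibres (for which only the easy inclusion of the restriction of $\mc I$ into $\mc I(z_0)$ is actually needed) --- are correct and are precisely the details the paper leaves implicit.
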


The proof is the same as the proof of Theorem \ref{thm main}.

\subsection{Lagrangian variety}
Consider the cotangent bundle $T^*(\C^n-\Delta)$ with dual coordinates $z_1,\dots,$ $z_n$,
$p_1,\dots,p_n$ with respect to  the standard symplectic form
$\om = \sum_{j=1}^n dp_j\wedge dz_j$. Consider the open subset $(\C^n-\Delta)\times (\C^\times)^n\subset T^*(\C^n-\Delta)$
of all points with nonzero coordinates $p_1,\dots,p_n$.
Consider  the map
\bea
\phi  :  C_\Phi\ \to\ (\C^n-\Delta)\times (\C^\times)^n,
\ (z,t) \mapsto  \Big(z_1,\dots,z_n, p_1=\frac{\der\Phi}{\der z_1}(z,t),\dots,p_n=\frac{\der\Phi}{\der z_n}(z,t)\Big).
\eea
Denote by $\La$ the image $\phi(C_\Phi)$ of the critical set. The set $\La$ is invariant with respect to the action of $\C^\times$ which multiplies
all coordinates $p_j$ and divides all coordinates $z_j$ by the same number.
Denote by  $\hat{\mc I}\subset \mc O((\C^n-\Delta)\times (\C^\times)^n)$ the ideal of functions   that equal zero on $\La$.

\begin{thm}
\label{thm main 3}
The ideal $\tilde {\mc I}\subset \mc O((\C^n-\Delta)\times (\C^\times)^n)$ coincides with the ideal $\hat{\mc I}$.
The subset $\La\subset (\C^n-\Delta)\times (\C^\times)^n$
is a smooth Lagrangian subvariety.
\end{thm}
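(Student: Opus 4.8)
The plan is to deduce both assertions from the algebraic identification already available in Theorem \ref{thm main 2}, reserving a short differential-geometric computation for the Lagrangian property. The whole argument rests on recognizing $\phi^*$ as the natural homomorphism of that theorem.

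First I would examine $\phi^*$ on regular functions. By the very definition of $\phi$ one has $\phi^*(z_j)=z_j|_{C_\Phi}$ and $\phi^*(p_j)=(\der\Phi/\der z_j)|_{C_\Phi}=[a_j/f_j]$, so $\phi^*:\mc O((\C^n-\Delta)\times(\C^\times)^n)\to A_\Phi$ is exactly the map $p_j\mapsto[a_j/f_j]$ of Theorem \ref{thm main 2}. By Lemma \ref{crit smooth} the set $C_\Phi$ is smooth and cut out transversally, so $\mc I$ is radical and $A_\Phi$ is the genuine ring of regular functions on $C_\Phi$. Theorem \ref{thm main 2} tells us $\phi^*$ is surjective with kernel $\tilde{\mc I}$; surjectivity makes $\phi$ a closed immersion, whose reduced image is the closed set $\La$ cut out by $\ker\phi^*$. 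Because $A_\Phi$ is reduced, $\ker\phi^*$ is the full vanishing ideal of $\La$, that is $\hat{\mc I}=\ker\phi^*=\tilde{\mc I}$, which is the first assertion. The same facts show that $\phi:C_\Phi\to\La$ is an isomorphism of varieties, so $\La$, being isomorphic to the smooth $n$-dimensional $C_\Phi$, is smooth of dimension $n$.

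It then remains only to check that $\om$ restricts to zero on the half-dimensional $\La$. For this I would use the tautological $1$-form $\alpha=\sum_j p_j\,dz_j$, for which $\om=d\alpha$. Writing $\iota:C_\Phi\hookrightarrow U^0$ for the inclusion and using that every $\der\Phi/\der t_i$ vanishes on $C_\Phi$, one gets $\iota^*d\Phi=\sum_j(\der\Phi/\der z_j)|_{C_\Phi}\,\iota^*dz_j$. On the other hand $\phi^*\alpha=\sum_j(\phi^*p_j)\,\phi^*dz_j=\sum_j(\der\Phi/\der z_j)|_{C_\Phi}\,\iota^*dz_j$, since $\phi^*dz_j=d(z_j|_{C_\Phi})=\iota^*dz_j$. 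Hence $\phi^*\alpha=d(\Phi|_{C_\Phi})$ is exact, so $\phi^*\om=\phi^*d\alpha=d(\phi^*\alpha)=0$. As $\phi:C_\Phi\to\La$ is a diffeomorphism, this forces the restriction of $\om$ to $\La$ to vanish, and together with $\dim\La=n$ we conclude that $\La$ is Lagrangian.

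The two one-form computations are routine, and the commutative-algebra content is entirely carried by Theorem \ref{thm main 2}. The one point deserving care --- the main obstacle, such as it is --- is the transition from ``$\ker\phi^*=\tilde{\mc I}$'' to ``$\hat{\mc I}=\tilde{\mc I}$'': this equates the scheme-theoretic kernel of $\phi^*$ with the set-theoretic vanishing ideal of the image, and it is legitimate only because $A_\Phi$ is the reduced coordinate ring of $C_\Phi$. That reducedness, and with it the fact that $\La$ is genuinely closed rather than merely constructible, is supplied by the transversality (nonvanishing Jacobian minor) established in Lemma \ref{crit smooth}, so no additional work is required.
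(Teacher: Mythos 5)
Your proposal is correct, but it reaches the two conclusions by a genuinely different route than the paper. For the equality $\hat{\mc I}=\tilde{\mc I}$, the paper reruns the monomial--reduction argument of Theorem \ref{thm main} to show $\hat{\mc I}\subset\tilde{\mc I}$ directly, whereas you deduce it formally from Theorem \ref{thm main 2} by observing that $\phi^*$ \emph{is} the natural homomorphism of that theorem, so $\ker\phi^*=\tilde{\mc I}$, and then invoking reducedness of $A_\Phi$ (guaranteed by the transversality in Lemma \ref{crit smooth}) to identify $\ker\phi^*$ with the vanishing ideal of the image. This is a legitimate shortcut and has the merit of making explicit exactly where reducedness is needed --- a point the paper's one-line sketch leaves implicit. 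For smoothness, the paper gives an independent rank count: using Lemma \ref{dim S} it shows the differentials of the $G_I$ (in the $z$-directions) together with those of the $F_I$ span at least $n$ dimensions at each point of $\La$; you instead get smoothness for free, since surjectivity of $\phi^*$ makes $\phi$ a closed immersion and hence an isomorphism of $C_\Phi$ onto $\La$, with $C_\Phi$ smooth of dimension $n$ by Lemma \ref{crit smooth}. Your argument additionally shows $\La$ is closed in $(\C^n-\Delta)\times(\C^\times)^n$, which the paper does not address; the paper's rank count, on the other hand, identifies concretely which defining equations have independent differentials, which is information of independent interest. Finally, your tautological one-form computation $\phi^*\bigl(\sum_j p_j\,dz_j\bigr)=\iota^*d\Phi$ is simply the expanded version of the paper's assertion that $\La$ is isotropic ``by the definition of $\phi$''; note only that $\Phi$ is multivalued, so one should phrase the exactness via the single-valued closed form $d\Phi=\sum_j a_j\,df_j/f_j$, which changes nothing in the conclusion $\phi^*\om=0$.
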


\begin{proof}
It is clear that $\tilde{\mc I}\subset \hat{\mc I}$. The proof of the inclusion $\hat{\mc I}\subset \tilde{\mc I}$ is  basically
the same as the proof of
Theorem \ref{thm main}. This gives the first statement of the theorem.

It is clear that $\dim \La=n$. To prove that $\La$ is smooth, it is enough to show that at any point of $\La$, the span of the differentials
of the functions $F_I(p), |I|=k-1,$ and $G_I(z,p), |I|=k+1$ is at least $n$-dimensional. By Lemma \ref{dim S}, the span of the $z$-parts of the
differentials of the functions $G_I(z,p)$, $I=|I|=k+1$, is $n-k$-dimensional. It is easy to see that the span of the
differentials of the functions $F_I(p)$, $I=|I|=k+1$, is at least $k$-dimensional, c.f. the example in the proof of
Lemma \ref{lem 1 f}. Hence $\La$ is smooth.

By the definition of  $\phi$, the set $\La$ is isotropic. Hence $\La$ is Lagrangian.
\end{proof}

Let $I=\{i_1,\dots,i_k\}\subset J$ be a $k$-element subset and $\bar I$ its complement. Then
the functions $z_I=\{z_i\,|\, i\in I\}$, $p_{\bar I}=\{p_j\,|\, j\in \bar I\}$, form a system of coordinates on $\La$. Indeed, we have
\bean
\label{coord}
p_{i_m} &=& - \frac 1{d_{i_m, i_1,\dots,\widehat{i_m},\dots,i_k}}\sum_{j\in\bar I} d_{j, i_1,\dots,\widehat{i_m},\dots,i_k} p_j, \qquad m=1,\dots,k,
\\
\notag
z_j
& =&
 \frac{a_j}{p_j} + \frac 1{d_{i_1,\dots,i_k}} \sum_{m=1}^k(-1)^{k-m}
 d_{j, i_1,\dots,\widehat{i_m},\dots,i_{k}} \Big(z_{i_m} - \frac{a_{i_m}}{p_{i_m}}\Big), \qquad j\in\bar I,
\eean
where in the second line the functions $p_{i_m}$ must be expressed in terms of the functions $p_j, j\in\bar I$, by using the first line.

We order the functions of the coordinate system $z_I,p_{\bar I}$ according to the increase of the low index. For example, if $k=3, n=6$,
$I=\{1,3,6\}$, then the order is $z_1,p_2,z_3,p_4,p_5,z_6$.

\begin{lem}
\label{lem trans fns}
Let $I=\{i_1,\dots,i_k\}$ and $I'=\{i_1',\dots,i_k'\}$ be two $k$-element subsets of $J$.
Consider the corresponding ordered coordinate systems
  $z_I$, $p_{\bar I}$ and $z_{I'}$, $p_{\bar {I'}}$. Express the coordinates of the second system in terms of
coordinates of the first system and denote by $\on{Jac}_{I,\bar{I'}}(z_I,p_{\bar I})$ the Jacobian of this change. Then
\bea
\on{Jac}_{I,\bar{I'}}(z_I,p_{\bar I}) = (d_{i_1',\dots,i_k'}/d_{i_1,\dots,i_k})^2.
\eea

\end{lem}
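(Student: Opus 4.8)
The plan is to reduce the computation to an elementary $2\times 2$ determinant by exploiting two facts: Jacobians multiply under composition, and the two ordered charts agree in all but two slots. First I would observe that both sides of the asserted identity are multiplicative: by the chain rule $\on{Jac}_{I,\bar{I''}}=\on{Jac}_{I',\bar{I''}}\,\on{Jac}_{I,\bar{I'}}$ for a third $k$-subset $I''$, and the squared Plücker ratios telescope in the same way, $(d_{I'}/d_I)^2(d_{I''}/d_{I'})^2=(d_{I''}/d_I)^2$. Since any two $k$-subsets of $J$ are joined by a chain in which consecutive subsets differ by exchanging a single element — and every $k$-subset is an admissible chart because all Plücker coordinates are nonzero by genericity — it suffices to treat the single-exchange case $I'=(I\setminus\{a\})\cup\{b\}$ with $a\in I$, $b\in\bar I$.

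Next I would compare the two ordered charts slot by slot. For every index $c\neq a,b$ the slot $c$ carries literally the same function in both charts ($z_c$ when $c\in I\cap I'$, and $p_c$ when $c\in\bar I\cap\bar{I'}$), so those rows of the Jacobian matrix are standard basis covectors, the block structure is triangular, and the determinant collapses to the $2\times 2$ block in slots $a,b$. In the chart $z_I,p_{\bar I}$ those two slots carry $z_a$ and $p_b$; in the chart $z_{I'},p_{\bar{I'}}$ they carry $p_a$ and $z_b$. Hence $\on{Jac}_{I,\bar{I'}}$ equals the determinant of the $2\times2$ matrix of partials of $(p_a,z_b)$ with respect to $(z_a,p_b)$, the remaining rows and columns contributing an identity block.

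Then I would evaluate the four partials from the relations of first and second kind. The relation of first kind for the $(k-1)$-set $I\setminus\{a\}$ expresses $p_a$ as a $\C$-linear combination of $\{p_j:j\in\bar I\}$ with no $z$-dependence; this gives at once $\der p_a/\der z_a=0$ and $\der p_a/\der p_b=-\,d_{b,\,I\setminus\{a\}}/d_{a,\,I\setminus\{a\}}$. The relation of second kind $G_{I\cup\{b\}}=0$ of \Ref{id 2}, solved for $z_b$, yields $\der z_b/\der z_a$ equal to a sign times $d_{b,\,I\setminus\{a\}}/d_{i_1,\dots,i_k}$. Because $\der p_a/\der z_a=0$, the $2\times2$ determinant is just $-(\der p_a/\der p_b)(\der z_b/\der z_a)$; since $d_{a,\,I\setminus\{a\}}=\pm d_{i_1,\dots,i_k}$ and $d_{b,\,I\setminus\{a\}}=\pm d_{i_1',\dots,i_k'}$, the two factors multiply to $(d_{i_1',\dots,i_k'}/d_{i_1,\dots,i_k})^2$. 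Conceptually the squaring reflects the symplectic pairing of the $z$- and $p$-directions: one copy of the Plücker ratio comes from the $p$-side (first kind) and one from the $z$-side (second kind).

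The hard part will be the sign bookkeeping. Each Plücker symbol must be reordered into increasing form, producing signs $(-1)^{m_0-1}$ and $(-1)^{r-1}$, where $a$ is the $m_0$-th element of $I$ and $b$ the $r$-th element of $I'$, and these must combine with the $-1$ from the antidiagonal of the $2\times2$ determinant and with the sign obtained in solving $G_{I\cup\{b\}}=0$ for $z_b$. I expect the verification that all these signs cancel — leaving exactly $+(d_{I'}/d_I)^2$ with no residual $(-1)^{k-1}$ — to be the only delicate point. It is essential here to read the coefficient of $z_{i_m}$ in the expression for $z_b$ directly off $G_{I\cup\{b\}}=0$, where the $m$-th term carries the sign $(-1)^{m-1}$, rather than off the coefficient $(-1)^{k-m}$ displayed in \Ref{coord}; tracking the signs this way makes the total exponent even, so that the claimed Jacobian is exactly $(d_{i_1',\dots,i_k'}/d_{i_1,\dots,i_k})^2$.
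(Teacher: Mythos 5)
Your proof is correct and takes essentially the same route as the paper's: reduce to the case where $I$ and $I'$ differ in a single element, observe that all but two slots of the two ordered charts coincide so the Jacobian collapses to a $2\times 2$ block, and read the two relevant partial derivatives off the relations of first and second kind, obtaining one Plücker ratio from each. The paper simply verifies the representative case $I=\{1,3,\dots,k+1\}$, $I'=\{2,3,\dots,k+1\}$, where $\der p_1/\der z_1=0$ reduces the determinant to a single product and the sign bookkeeping you flag becomes immediate.
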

\begin{proof}
It is enough to check this formula for the case $I=\{1,3,\dots,k+1\}$ and $I'=\{2,3,\dots,k+1\}$. Then
\bea
p_1 = - \frac{d_{2,3,\dots,k+1}} {d_{1,3,\dots,k+1}} p_2 + \dots ,
\qquad
z_{2}
 =
 \frac{a_{2}}{p_{2}} + \frac{d_{2,3,\dots,k+1}} {d_{1,3,\dots,k+1}}z_1 + \dots ,
\eea
where the first dots denote the terms which do not
depend on $z_1,p_2$ and the second dots denote the terms which do not depend on $z_1$.
According to these formulas the $2\times 2$ Jacobian of the dependence of $p_1,z_2$ on $z_1,p_2$ equals
$(d_{2,3,\dots,k+1}/d_{1,3,\dots,k+1})^2$ and hence $\on{Jac}_{I,\bar{I'}}(z_I,p_{\bar I}) =
(d_{2,3,\dots,k+1}/d_{1,3,\dots,k+1})^2$.
\end{proof}

\subsection{Generating functions}

Consider the function
\bean
\label{big fn}
\Psi = \sum_{j\in J} a_j\ln p_j - \sum_{i\in I} z_{i}p_{i}
\eean
of $n+k$ variables $z_{j}, j\in I$, \, $p_j, j\in J$. Express in $\Psi$ the variables $p_i, i\in I$, according to \Ref{coord}.
Denote by $\Psi(z_I,p_{\bar I})$ the resulting function of variables $z_I$, $p_{\bar I}$.

\begin{thm}
\label{thm Gener}
The function $\Psi(z_I,p_{\bar I})$ is a generating function of the Lagrangian variety $\La$. Namely, $\La$ lies the image of the map
\bean
\label{generate Psi}
(z_I, p_{\bar I}) \ \mapsto\ \big(z_I, z_{\bar I}=\frac{\der \Psi_I}{\der p_{\bar I}}(z_I, p_{\bar I}), p_I = - \frac{\der \Psi_I}{\der z_{I}}(z_I, p_{\bar I}), p_{\bar I}\big).
\eean

\end{thm}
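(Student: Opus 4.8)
The plan is to recognize $\Psi(z_I,p_{\bar I})$ as a partial Legendre transform of a primitive of the tautological $1$-form $\sum_{j\in J}p_j\,dz_j$ on $\La$, and to identify that primitive with the master function $\Phi$ itself. By definition, the generating property \Ref{generate Psi} asserts that on $\La$ one has $p_i=-\der\Psi/\der z_i$ for $i\in I$ and $z_j=\der\Psi/\der p_j$ for $j\in\bar I$, the remaining coordinates being expressed through $z_I,p_{\bar I}$ as in \Ref{coord}. Since $z_I,p_{\bar I}$ is already a coordinate system on $\La$, it suffices to verify these two families of identities.

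First I would record the two inputs we may use. By Theorem \ref{thm main 3} the variety $\La$ is Lagrangian, so $\om|_\La=0$ and hence the $1$-form $\sum_{j\in J}p_j\,dz_j$ is closed on $\La$. On the critical set $C_\Phi$ all $t$-derivatives of $\Phi$ vanish, so, using \Ref{der z},
\[
d\bigl(\Phi|_{C_\Phi}\bigr)=\sum_{j\in J}\frac{\der\Phi}{\der z_j}\,dz_j=\sum_{j\in J}p_j\,dz_j .
\]
The map $\phi:C_\Phi\to\La$ is injective, because $p_j=a_j/f_j$ recovers $f_j$ and, by genericity of the $b^m_j$, the point $t$; therefore $\Phi$ descends to a function $W$ on $\La$ with $dW=\sum_{j\in J}p_j\,dz_j$.

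Next I would perform the partial Legendre transform in the mixed coordinates $z_I,p_{\bar I}$. Setting $W_I:=-W+\sum_{j\in\bar I}z_jp_j$ and differentiating gives
\[
dW_I=-\sum_{i\in I}p_i\,dz_i+\sum_{j\in\bar I}z_j\,dp_j ,
\]
so that $-\der W_I/\der z_i=p_i$ for $i\in I$ and $\der W_I/\der p_j=z_j$ for $j\in\bar I$. These are exactly the relations in \Ref{generate Psi}, so the proof reduces to identifying $W_I$ with $\Psi(z_I,p_{\bar I})$ up to an additive constant, which does not affect the map.

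This last identification is the computational heart, and the only place requiring care. From $\Phi=\sum_{j\in J}a_j\log f_j$ and $f_j=a_j/p_j$ on $\La$ one gets $W=\sum_{j\in J}a_j\log a_j-\sum_{j\in J}a_j\log p_j$, whence
\[
W_I-\Psi=\sum_{j\in J}z_jp_j-\sum_{j\in J}a_j\log a_j .
\]
The sum $\sum_{j\in J}z_jp_j$ equals the constant $|a|$ by Lemma \ref{lem 1 in tilde} (equivalently part (iv) of Lemma \ref{p properties}), so $W_I-\Psi$ is constant and the theorem follows. As an independent check that avoids the descent of $\Phi$, one can differentiate $\Psi$ directly: the term $-\sum_{i\in I}z_ip_i$ gives $\der\Psi/\der z_i=-p_i$ at once, while $\der\Psi/\der p_j=z_j$ for $j\in\bar I$ reduces, after inserting $\der p_{i_m}/\der p_j$ from the first line of \Ref{coord}, to the second line of \Ref{coord}, modulo the bookkeeping of the Pl\"ucker signs $d_{i_m,i_1,\dots,\widehat{i_m},\dots,i_k}=(-1)^{m-1}d_{i_1,\dots,i_k}$. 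The conceptual route above is preferable precisely because it uses only closedness, the primitive $\Phi$, and the identity $\sum_{j\in J}z_jp_j=|a|$, and is therefore insensitive to these sign conventions.
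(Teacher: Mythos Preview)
Your proof is correct and takes a genuinely different route from the paper. The paper's own proof is a single sentence: ``The proof that these formulas give \Ref{coord} is by straightforward verification,'' i.e.\ one differentiates $\Psi$ directly and checks that the output agrees with the parametrization \Ref{coord}. You instead explain \emph{why} $\Psi$ works: it is, up to an additive constant, the partial Legendre transform of the master function $\Phi$ restricted to the critical set, the latter being a primitive of the tautological $1$-form on $\La$. The key extra ingredient you invoke is the identity $\sum_{j\in J}z_jp_j=|a|$ on $\La$ (Lemma \ref{lem 1 in tilde} / Lemma \ref{p properties}(iv)), which is what makes $W_I-\Psi$ constant; the paper's direct verification does not need this lemma at all. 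Your approach buys conceptual clarity and sign-robustness (as you note, it sidesteps the Pl\"ucker bookkeeping), while the paper's approach is shorter and entirely self-contained within the formulas \Ref{coord}. Your closing ``independent check'' is essentially the paper's proof.
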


\begin{proof}
The proof that these formulas give  \Ref{coord}  is by straightforward verification.
\end{proof}

\subsection {Integrals in involution}
Consider the standard Poisson bracket on $T^*(\C^n)$,
\bea
\{M,N\} = \sum_{j=1}^n \Big(\frac{\der M}{\der z_j}\frac{\der N}{\der p_j}       - \frac{\der M}{\der p_j}\frac{\der N}{\der z_j}\Big)
\eea
for  $M,N \in \mc O(T^*(\C^n))$. The functions are in involution if $\{M,N\}=0$.

\begin{thm}
\label{thm involution}
All functions $F_I(p)$, $|I|=k-1$, and $G_I(z,p)$, $|I|=k+1$, are in involution.
\end{thm}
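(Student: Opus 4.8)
The plan is to verify $\{M,N\}=0$ separately for the three kinds of pairs: $\{F_I,F_{I'}\}$ with $|I|=|I'|=k-1$, $\{G_I,G_{I'}\}$ with $|I|=|I'|=k+1$, and the mixed bracket $\{F_I,G_{I'}\}$. The first two cases are essentially formal, and I expect the mixed case to carry all the content, reducing to the Pl\"ucker relation of Lemma \ref{plucker}. To begin, the bracket $\{F_I,F_{I'}\}$ vanishes for a trivial reason: each $F_I(p)$ depends only on the variables $p_1,\dots,p_n$, so $\der F_I/\der z_j=0$ for all $j$, and every term of the Poisson bracket contains such a factor.

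For the bracket $\{G_I,G_{I'}\}$ I would exploit the separable structure recorded in \Ref{id 2}: each $G_I$ is a $\C$-linear combination, with constant coefficients the Pl\"ucker numbers $\pm\>d_{i_1,\dots,\widehat{i_m},\dots,i_{k+1}}$, of the one-variable functions $G_j(z_j,p_j)=z_j-a_j/p_j$ defined in \Ref{G}. Since $G_j$ involves only the conjugate pair $(z_j,p_j)$, two distinct functions $G_j$ and $G_l$ depend on disjoint pairs of variables and hence Poisson-commute, while $\{G_j,G_j\}=0$ automatically. By bilinearity of the bracket over constant coefficients, $\{G_I,G_{I'}\}=\sum_{m,m'}(\pm d)(\pm d)\,\{G_{i_m},G_{i'_{m'}}\}=0$.

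The crux is the mixed bracket, which I would compute directly. Writing $I=\{i_1,\dots,i_{k-1}\}$ and $I'=\{i'_1,\dots,i'_{k+1}\}$, I have $\der F_I/\der p_j=d_{j,i_1,\dots,i_{k-1}}$, and since $F_I$ is $z$-independent only the terms $-\,\der F_I/\der p_j\cdot\der G_{I'}/\der z_j$ survive. From \Ref{id 2} one reads off $\der G_{I'}/\der z_{i'_m}=(-1)^{m-1}d_{i'_1,\dots,\widehat{i'_m},\dots,i'_{k+1}}$, so collecting the contributions over $j=i'_m$ yields
\bea
\{F_I,G_{I'}\}\,=\,-\sum_{m=1}^{k+1}(-1)^{m-1}\,d_{i'_m,\<\>i_1,\dots,i_{k-1}}\;d_{i'_1,\dots,\widehat{i'_m},\dots,i'_{k+1}},
\eea
which is precisely the left-hand side of the Pl\"ucker relation \Ref{Plu} with $(j_1,\dots,j_{k+1})=(i'_1,\dots,i'_{k+1})$ and the same $(i_1,\dots,i_{k-1})$. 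Hence it vanishes by Lemma \ref{plucker}.

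The only genuine obstacle is the sign bookkeeping in the mixed case: one must check that the alternating signs $(-1)^{m-1}$ produced by $\der G_{I'}/\der z_{i'_m}$ align with the alternating signs in \Ref{Plu}, and that the Pl\"ucker factor $d_{i'_m,i_1,\dots,i_{k-1}}$ matches the coefficient $\der F_I/\der p_{i'_m}$ with the index $i'_m$ placed first. Once the indices are aligned the identification is immediate, and assembling the three cases completes the proof.
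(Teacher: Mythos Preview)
Your proof is correct and follows essentially the same route as the paper's: trivial vanishing of $\{F_I,F_{I'}\}$ since the $F_I$ depend only on $p$, vanishing of $\{G_I,G_{I'}\}$ via the decomposition $G_I=\sum_m(\pm d)\,G_{i_m}$ into commuting one-variable functions, and reduction of the mixed bracket to the Pl\"ucker relation \Ref{Plu}. The only cosmetic difference is that the paper computes $\{G_I,F_{I'}\}$ rather than $\{F_I,G_{I'}\}$, so its expression lacks your overall minus sign; your explicit remark on the sign alignment is a welcome clarification.
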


\begin{proof} Clearly, $\{F_I,F_{I'}\}=0$, since $F_I,F_{I'}$ depend on $z$ only.
If $I=\{j_1,\dots,j_{k+1}\}$ and $I'=\{i_1,\dots, i_{k-1}\}$, then
\bea
\{G_I,F_{I'}\} = \sum_{m=1}^{k+1} (-1)^{m-1} d_{j_1,\dots,\widehat{j_m},\dots,j_{k+1}} d_{j_m,i_{1}\dots,i_{k-1}} =0
\eea
due to the Pl\"ucker relation \ref{Plu}.

Recall the function $G_j(z_j,p_j)$ in \Ref{G}. It is clear that $\{G_j,G_{j'}\}=0$ for all $j,j'\in J$. Now
$\{G_I,G_{I'}\}=0$ for all $I, I'$ with $|I|=|I'|=k+1$, since  $G_I, G_{I'}$ are linear combination of $G_j$ with constant coefficients.
\end{proof}

All the functions $F_I, G_I$ define commuting Hamiltonian flows, preserving $\La$ and giving symmetries of $\La$. For $I=\{i_1,\dots, i_{k-1}\}$,
the flow $\phi_I^t$ of the function $F_I(p)$ has the form
\bea
(z_1,\dots,z_n,p)\mapsto (z_1 + d_{1,i_1,\dots, i_{k-1}}t, \dots, z_n + d_{n,i_1,\dots, i_{k-1}}t, p).
\eea
For $I=\{j_1,\dots,j_{k+1}\}$, the flow $\phi_I^t$ of the function $G_I(z,p)$ does not change the pair of coordinate
$(z_j,p_j)$ of a point, if $j\notin I$, and transforms the pair $(z_{j_m},p_{j_m}) $ to the pair
\bea
 (z_{j_m} -\frac {a_{j_m}}{p_{j_m}} +\frac {a_{j_m}}{p_{j_m} + (-1)^m d_{j_1,\dots,\widehat{j_m},\dots,i_{k+1}}t} ,\
 p_{j_m} + (-1)^m d_{j_1,\dots,\widehat{j_m},\dots,i_{k+1}}t)\,
\eea
 for $m=1,\dots,k+1$.

\begin{rem}
An interesting property of the Hamiltonians $F_I, G_I$ is that they are regular with respect the Pl\"ucker coordinates
$d_{i_1,\dots,i_k}$.
Hence, they can be used to study the Langrange varieties of the arrangements in $\C^n\times\C^k$ associated with
not necessarily generic matrices $(b^i_j)$.
\end{rem}

\subsection{Hessian as a function on the Lagrange variety} Let $z\in\C^n-\Delta$ and $t^0$ a critical point
of the master function $\Phi(z,\,\cdot\,)$. An important characteristic of the critical point is the
Hessian
\bea
\on{Hess}\,\Phi(z,t^0) = \on{det}_{i,j=1}^k \Big(\frac{\der^2\Phi}{\der t_i\der t_j}(z,t^0 )   \Big) ,
\eea
see, for example, \cite{AGV, MV, V1, V2}.

For a subset $I=\{i_1,\dots,i_k\}\subset J$, we denote by $d_I^2$ the number  $(d_{i_1,\dots,i_k})^2$.

\begin{lem}
\label{lem Hess}
We have
\bean
\label{Hess}
\on{Hess}\,\Phi \,=\, (-1)^k
 \sum_{I\subset J, |I|=k} \! d^2_I \prod_{i\in I}^k \frac{p_{i}^2} {a_{i}} .
\eean
\end{lem}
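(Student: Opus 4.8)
The plan is to compute the Hessian matrix explicitly, recognize it as a congruence of the coefficient matrix with a diagonal weight matrix, and then apply the Cauchy--Binet formula. First I would differentiate the master function twice. Since $f_j = z_j+\sum_{m=1}^k b_j^m t_m$ we have $\der f_j/\der t_i = b_j^i$, so
\bea
\frac{\der^2\Phi}{\der t_i\,\der t_l} \,=\, -\sum_{j\in J} b_j^i\,b_j^l\,\frac{a_j}{f_j^2}\,.
\eea
Introducing the $k\times n$ matrix $B=(b_j^m)$ (with row index $m$ and column index $j$) and the diagonal matrix $D=\diag\big(a_1/f_1^2,\dots,a_n/f_n^2\big)$, the $k\times k$ matrix of second derivatives is exactly $-B\,D\,B^{\mathsf T}$. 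Hence $\on{Hess}\,\Phi = (-1)^k\det\big(B\,D\,B^{\mathsf T}\big)$.

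Next I would apply Cauchy--Binet. Since $B$ is $k\times n$ and $D\,B^{\mathsf T}$ is $n\times k$,
\bea
\det\big(B\,D\,B^{\mathsf T}\big) \,=\, \sum_{I\subset J,\;|I|=k}\det(B_I)\,\det\big((D\,B^{\mathsf T})_I\big)\,,
\eea
where $B_I$ is the $k\times k$ submatrix on the columns $I=\{i_1,\dots,i_k\}$ and $(D\,B^{\mathsf T})_I$ is the submatrix on the rows $I$. The determinant $\det(B_I)$ is that of the transpose of $(b^m_{i_\ell})_{\ell,m=1}^k$, hence equals $d_{i_1,\dots,i_k}$; and pulling the diagonal factors out of the rows of $(D\,B^{\mathsf T})_I$ gives $\det\big((D\,B^{\mathsf T})_I\big) = \big(\prod_{i\in I} a_i/f_i^2\big)\,d_{i_1,\dots,i_k}$. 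The two minors combine to produce $d_I^2$, so
\bea
\det\big(B\,D\,B^{\mathsf T}\big) \,=\, \sum_{I\subset J,\;|I|=k} d_I^2\prod_{i\in I}\frac{a_i}{f_i^2}\,.
\eea

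Finally, since the Hessian is being regarded as a function on the critical set (equivalently on $\La$), I would substitute $p_i = a_i/f_i$, which is exactly the value of $\der\Phi/\der z_i$ from \Ref{der z} defining the coordinates $p_i$. Then $a_i/f_i^2 = p_i^2/a_i$, and the displayed sum becomes $(-1)^{-k}$ times the claimed expression, yielding \Ref{Hess}. The computation is entirely routine; the only points requiring care are matching the index conventions so that \emph{both} factors produced by Cauchy--Binet are genuinely the Pl\"ucker coordinate $d_I$ (rather than a minor of a wrong submatrix, with a spurious sign or transpose), and correctly identifying $a_i/f_i^2$ with $p_i^2/a_i$ through the relation $p_i=a_i/f_i$ valid on the critical set.
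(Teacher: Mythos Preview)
Your argument is correct and is precisely the direct computation that the paper has in mind: the paper's proof simply records the formula $\on{Hess}\,\Phi = (-1)^k \sum_{1\leq i_1<\dots<i_k\leq n} d^2_{i_1,\dots,i_k}\prod_{m=1}^k a_{i_m}/ f^2_{i_m}$, attributes it to \cite{V2}, and declares it ``obvious,'' then identifies $a_i/f_i^2$ with $p_i^2/a_i$. Your Cauchy--Binet expansion of $\det(-B\,D\,B^{\mathsf T})$ is exactly the content behind that ``obvious,'' so there is no substantive difference in approach.
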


\begin{proof}
In \cite{V2}, the formula $\on{Hess}\,\Phi = (-1)^k \sum_{1\leq i_1<\dots<i_k\leq n} d^2_{i_1,\dots,i_k}\prod_{m=1}^k a_{i_m}/ f^2_{i_m}$
is given, which is the right hand side of \Ref{Hess}. The formula itself is obvious.
\end{proof}

\subsection{Hessian and Jacobian}

Let $M=\{m_1,\dots,m_k\}\subset J$ be a $k$-element subset and $z_M,p_{\bar M}$ the corresponding ordered coordinate
system on $\La$. The functions $z_1,\dots,z_n$ form an ordered coordinate system on $\C^n-\Delta$.
Consider the projection $\La \mapsto \C^n-\Delta$, $(z,p)\mapsto z$, and the Jacobian
 $\on{Jac}_I(z_M,p_{\bar M})$ of the projection with respect to the chosen coordinate systems.



\begin{thm}
\label{thm hess jac}  As a function on $\La$, the function $d_M^2\on{Jac}_M$ does not depend on $M$ and
\bean
\label{jac hess}
d_M^2\on{Jac}_M =
 (-1)^{n-k}
 \sum_{L\subset J,\, |L|=n-k} d^2_{\bar L} \prod_{j\in L} \frac{a_j}{p_j^2} .
\eean

\end{thm}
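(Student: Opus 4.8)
The plan is to deduce the formula from the Hessian computation of Lemma~\ref{lem Hess}. The $M$-independence of $d_M^2\on{Jac}_M$ is immediate from Lemma~\ref{lem trans fns}: factoring the projection $\La\to\C^n-\Delta$ through the change of coordinates $(z_M,p_{\bar M})\mapsto(z_{M'},p_{\bar{M'}})$ and using the chain rule gives $\on{Jac}_M=\on{Jac}_{M'}\,(d_{M'}/d_M)^2$, hence $d_M^2\on{Jac}_M=d_{M'}^2\on{Jac}_{M'}$. Moreover, writing $I=\bar L$ in Lemma~\ref{lem Hess} and using $\prod_{j\in J}(a_j/p_j^2)\prod_{i\in I}(p_i^2/a_i)=\prod_{j\in L}(a_j/p_j^2)$ together with $(-1)^{n+k}=(-1)^{n-k}$, one checks that the asserted formula \Ref{jac hess} is equivalent to the single identity
\beq
\label{key equiv}
d_M^2\,\on{Jac}_M=(-1)^n\Big(\prod_{j\in J}\frac{a_j}{p_j^2}\Big)\on{Hess}\,\Phi .
\eeq
Thus it is enough to establish \Ref{key equiv}.

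To prove \Ref{key equiv} I identify $\La$ with the critical set $C_\Phi$ via the map $\phi$ and compute $\on{Jac}_M$ as a ratio of Jacobians through an auxiliary coordinate system on $C_\Phi$. By Lemma~\ref{crit smooth} the block $\Phi_{tz_M}:=(\der^2\Phi/\der t_i\der z_m)_{i=1,\dots,k,\,m\in M}$ is nonsingular, so the equations $\der\Phi/\der t_i=0$ may be solved for $z_M$ as functions of $(t,z_{\bar M})$, and $(t,z_{\bar M})$ is a coordinate system on $C_\Phi$. With $z=(z_1,\dots,z_n)$ the chain rule gives
\be
\on{Jac}_M=\frac{\det\big(\der z/\der(t,z_{\bar M})\big)}{\det\big(\der(z_M,p_{\bar M})/\der(t,z_{\bar M})\big)} .
\ee
The numerator is block triangular, since the coordinates $z_{\bar M}$ map to themselves, so it equals $\det(\der z_M/\der t)$; implicit differentiation of $\der\Phi/\der t_i=0$ yields $\det(\der z_M/\der t)=(-1)^k\on{Hess}\,\Phi/\det\Phi_{tz_M}$.

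For the denominator I use $\der^2\Phi/\der t_i\der z_j=-(p_j^2/a_j)b^i_j$ and $\der^2\Phi/\der t_i\der t_l=-\sum_{j\in J}(p_j^2/a_j)b^i_j b^l_j$; in matrix form $\Phi_{tt}=-B^{\mathsf T}PB$, where $B=(b^i_j)$ is the $n\times k$ coefficient matrix and $P=\on{diag}(p_j^2/a_j)_{j\in J}$, and I write $B_M,P_M$ (resp.\ $B_{\bar M},P_{\bar M}$) for the parts indexed by $M$ (resp.\ $\bar M$). Since $f_j$ for $j\in\bar M$ does not involve $z_M$, the block $\der p_{\bar M}/\der z_{\bar M}=-P_{\bar M}$ is diagonal, while $\der p_{\bar M}/\der t=-P_{\bar M}B_{\bar M}$, $\der z_M/\der t=-\Phi_{tz_M}^{-1}\Phi_{tt}$ and $\der z_M/\der z_{\bar M}=\Phi_{tz_M}^{-1}B_{\bar M}^{\mathsf T}P_{\bar M}$. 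Factoring $\Phi_{tz_M}^{-1}$ out of the top rows reproduces the factor $\det\Phi_{tz_M}$, which cancels against the numerator; factoring $P_{\bar M}$ out of the $z_{\bar M}$ columns and taking the Schur complement of the remaining $-I$ block reduces the computation to
\be
-\Phi_{tt}-B_{\bar M}^{\mathsf T}P_{\bar M}B_{\bar M}=B_M^{\mathsf T}P_M B_M ,
\ee
whose determinant is $d_M^2\prod_{m\in M}(p_m^2/a_m)$. Collecting the surviving factors, $\det P_{\bar M}\cdot\prod_{m\in M}(p_m^2/a_m)=\prod_{j\in J}(p_j^2/a_j)$, and the signs combine to $(-1)^{2k-n}=(-1)^n$, giving exactly \Ref{key equiv}; substituting Lemma~\ref{lem Hess} then yields \Ref{jac hess}.

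The main obstacle is this denominator determinant: one must arrange the four blocks so that the mixed second derivative $\det\Phi_{tz_M}$ cancels and the Schur complement collapses through the splitting $\Phi_{tt}+B_{\bar M}^{\mathsf T}P_{\bar M}B_{\bar M}=-B_M^{\mathsf T}P_M B_M$, which is precisely what produces the clean factor $d_M^2$. The remaining difficulty is the bookkeeping of signs arising from the orderings of the two coordinate systems, which must be tracked carefully to confirm the overall sign $(-1)^{n-k}$ in \Ref{jac hess}.
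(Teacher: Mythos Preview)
Your argument is correct and complete, but it follows a genuinely different route from the paper's. The paper proves the theorem first and then deduces Corollary~\ref{cor hess jac}; you reverse this, establishing the identity $d_M^2\on{Jac}_M=(-1)^n\big(\prod_{j\in J}a_j/p_j^2\big)\on{Hess}\,\Phi$ directly and then reading off \Ref{jac hess} via Lemma~\ref{lem Hess}. The paper's own proof avoids any computation on $C_\Phi$: it uses the generating function of Theorem~\ref{thm Gener} to rewrite $\on{Jac}_M$ as the Hessian $\det\big(\partial^2\tilde\Psi_M/\partial p_{\bar M}\partial p_{\bar M}\big)$ of $\tilde\Psi_M=\sum_{j}a_j\ln p_j$ (with $p_M$ eliminated via \Ref{coord}), observes that this Hessian is a polynomial of the shape $\sum_{|L|=n-k}c_L\prod_{j\in L}a_j/p_j^2$ with the $c_L$ independent of $M$, and then determines each $c_L$ by the simplest choice $M=\bar L$, for which the coefficient is visibly $(-1)^{n-k}d_{\bar L}^2$. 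That argument is slick and exploits the Lagrangian structure directly; your matrix/Schur-complement computation is more hands-on but has the virtue of making the link between $\on{Jac}_M$ and $\on{Hess}\,\Phi$ completely explicit, without passing through the intermediate sum.

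One remark on your closing worry about signs: there is in fact no residual bookkeeping. The ratio $\det\big(\partial z/\partial(t,z_{\bar M})\big)\big/\det\big(\partial(z_M,p_{\bar M})/\partial(t,z_{\bar M})\big)$ is computed with the targets in block order $(z_M,z_{\bar M})$ and $(z_M,p_{\bar M})$; passing to the paper's interleaved order (by increasing lower index) applies the \emph{same} permutation to both, since only the symbols $z_j\leftrightarrow p_j$ for $j\in\bar M$ change, not their positions. Hence the reordering signs cancel in the ratio, and the overall sign $(-1)^{2k-n}=(-1)^n$ you obtained is already the correct one.
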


\begin{proof}
The function $d_M^2\on{Jac}_M$ does not depend on $M$ by Lemma \ref {lem trans fns}.

Consider the function
$\tilde \Psi = \sum_{j\in J} a_j\ln p_j$
of $n$ variables $p_{j}$. Express in $\tilde \Psi$ the variables $p_M$ in terms of variables $p_{\bar M}$ by formulas  \Ref{coord}.
Denote by $\tilde \Psi_M(p_{\bar M})$ the resulting function.
By Theorem \ref{thm Gener},   $\on{Jac}_M = \on{det}\big(\frac {\der^2 \tilde \Psi_M}{\der p_{\bar M} \der p_{\bar M}}\big)$.
This implies that $d_M^2\on{Jac}_M$ is a polynomial in $a_j, j\in J$, of the form
\bea
d_M^2\on{Jac}_M = \sum_{L\subset J,\, |L|=n-k} c_L \prod_{j\in L} \frac{a_j}{p_j^2},
\eea
where $c_L$ are numbers independent of $M$. Our goal is to show that $c_L = (-1)^{n-k}d^2_{\bar L}$ but this is clear for $L=M$. This proves the theorem.
\end{proof}

\begin{cor}
\label{cor hess jac}
We have
\bean
\label{jac hess}
d_M^2\on{Jac}_M  = (-1)^n \Hess\, \Phi \,  \prod_{j\in J} \frac{a_j}{p_j^2}  \, .
\eean

\end{cor}

\section{Characteristic variety of the Gauss-Manin differential equations}
\label{sec last}

\subsection{Space $\Sing V$} Consider the complex vector space $V$ generated by vectors
$v_{i_1,\dots,i_k}$ with $i_1,\dots,i_k\in J$ subject to the relations $v_{i_{\si(1)},\dots, i_{\si(k)}}=
(-1)^\si v_{i_1,\dots,i_k}$ for any $i_1,\dots,i_k\in J$ and $\si\in S_k$. The vectors $v_{i_1,\dots,i_k}$
with $1\leq i_1<\dots<i_k\leq n$ form a basis of $V$.
If $v = \sum_{1\leq i_1<\dots<i_k\leq n} c_{i_1,\dots,i_k}v_{i_1,\dots,i_k}$ is a vector of $V$,
 we introduce the numbers $c_{i_1,\dots,i_k}$ for all $ i_1,\dots,i_k\in J$ by the rule:
 $c_{i_{\sigma(1)},\dots,i_{\sigma(k)}}=(-1)^\sigma c_{i_1,\dots,i_k}$.
 We introduce the subspace $\sing V\subset V$ of singular vectors by the formula
\bea
\sing V
 =
\Big\{ \sum_{1\leq i_1<\dots<i_k\leq n} c_{i_1,\dots,i_k}v_{i_1,\dots,i_k}\ | \ \sum_{j\in J} \,a_j\,c_{j,j_1,\dots,j_{k-1}}=0\
\text{for all}\ \{j_1,\dots,j_{k-1}\}\subset J \Big\}.
\eea
The symmetric bilinear contravariant form on $V$ is defined by the formulas:
\linebreak
$S(v_{i_1,\dots,i_k},v_{j_1,\dots,j_k}) = 0$,  if $ \{i_1,\dots,i_k\}\ne \{i_1,\dots,i_k\}$,
and
$S(v_{i_1,\dots,i_k},v_{i_1,\dots,i_k}) = \prod_{m=1}^k a_{i_m}$, if
$i_1,\dots,i_k$ are distinct.
Denote by  $s^\perp : V\to \sing V$ the orthogonal projection with respect to the contravariant form.

\subsection{Differential equations}

Consider the master function $\Phi(z,t)$ as a function  on $ U^0\subset \C^n\times \C^k$.
Let $\kappa$ be a nonzero complex number.
The function $e^{\Phi(z,t)/\kappa}$
defines a rank one local system $\mc L_\kappa$
on $U^0$ whose horizontal sections
over open subsets of $\tilde U$
 are univalued branches of $e^{\Phi(z,t)/\kappa}$ multiplied by complex numbers.
The vector bundle
\bea
\cup_{z\in \C^n-\Delta}\,H_k(U(\A(z)), \mc L_\kappa\vert_{U(\A(z))})\ \to\
 {} \ \C^n-\Delta
 \eea
has the canonical flat Gauss-Manin connection.  For a  horizontal section
\\
$\ga(z)\in$ $ H_k(U(\A(z)), \mc L_\kappa\vert_{U(\A(z))})$, consider the $V$-valued function
\bea
I_\ga(z) = \sum_{1\leq i_1<\dots<i_k\leq n} \big(\int_{\ga(z)}e^{\Phi(z,t)/\kappa} d \ln f_{i_1}\wedge \dots\wedge
 d \ln f_{i_k}\big) v_{i_1,\dots,i_k} .
 \eea
For any horizontal section $\ga(z)$, the function $I_\ga(z)$ takes values in $\sing V$ and satisfies the
Gauss-Manin differential equations
\bean
\label{GM}
\ka\frac{\der I_\ga}{\der z_j} = K_j(z) I_\ga, \qquad j\in J,
\eean
where $K_j(z) \in \End (\sing V)$ are suitable linear operators independent of $\ka$ and $\ga$.
Formulas for $K_j(z)$ see, for example,  in \cite[Formula (5.3)]{V3}.

For $z\in \C^n-\Delta$, the subalgebra $\B(z)\subset \End (\sing V)$ generated by the identity operator and the
operators $K_j(z), j\in J$, is called the Bethe algebra
at $z$ of the Gauss-Manin differential equations. The Bethe algebra is a maximal commutative subalgebra of $\End (\sing V)$,
see \cite[Section 8]{V3}.

We define the characteristic
variety of the $\kappa$-dependent $D$-module associated with the Gauss-Manin differential equations \Ref{GM} as
\be
\Spect = \{(z,p)\in T^*(\C^n-\Delta)\ |\ \exists
v\in \sing\,V\ \text{with}\ K_j(z)v = p_jv,\ j\in J\}.
\ee

\subsection{Identification}
Let $z\in\C^n-\Delta$.  By Lemma \ref{p properties}, given $j_1\in J$,
the  monomials $p_{i_1}\dots p_{i_k}$, with  $i_1<\dots<i_k$ and $j_1\notin\{i_1,\dots,i_k\}$,
form a $\C$-basis of $A_\Phi(z)$.
Consider the linear map $\mu : A_\Phi(z) \to \sing\,V$
which sends $d_{i_1,\dots,i_k}p_{i_1}\dots p_{i_k}$ to $s^\perp(v_{i_1,\dots, i_k})$ for all
 $i_1<\dots<i_k$ with $j_1\notin\{i_1,\dots,i_k\}$.

\begin{thm} [{\cite[Corollary 6.16]{V4}}]
\label{thm iso}
The linear map $\mu$ does not depend on $j_1$ and is an isomorphism  of complex vector spaces. For any $j\in J$, the isomorphism $\mu$
identifies the operator of multiplication by $p_j$ on $A_\Phi(z)$ and the operator $K_j(z)$ on $\sing\,V$.
\end{thm}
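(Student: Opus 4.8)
The plan is to realize the inverse of $\mu$ as one canonical linear map, obtaining the independence of $j_1$ and the bijectivity simultaneously, and then to check the intertwining with the $K_j(z)$ separately on a basis. First I would introduce the canonical map $\alpha : V \to A_\Phi(z)$ determined on generators by $\alpha(v_{i_1,\dots,i_k}) = d_{i_1,\dots,i_k}[p_{i_1}\cdots p_{i_k}]$ for all indices $i_1,\dots,i_k\in J$ (abbreviated $d_I[p_I]$, with $I=\{i_1,\dots,i_k\}$). This respects the antisymmetry of the $v_{i_1,\dots,i_k}$, since $d_{i_1,\dots,i_k}$ is an alternating determinant while $p_{i_1}\cdots p_{i_k}$ is symmetric, so $\alpha$ is well defined. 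As every $a_j\neq 0$, the form $S$ is nondegenerate, and a short computation shows $(\sing V)^{\perp}$ is spanned by the vectors $w_I=\sum_{j\in J}v_{j,i_1,\dots,i_{k-1}}$ over all $(k-1)$-subsets $I$. For each such $I$, $\alpha(w_I)=[\,p_{i_1}\cdots p_{i_{k-1}}\sum_{j\in J}d_{j,i_1,\dots,i_{k-1}}p_j\,]=[\,p_{i_1}\cdots p_{i_{k-1}}F_I(p)\,]=0$ by the $I$-relation of first kind (Lemma \ref{p properties}(ii)). Hence $\alpha$ kills $(\sing V)^{\perp}$ and descends to $\bar\alpha : \sing V\to A_\Phi(z)$ with $\bar\alpha\circ s^{\perp}=\alpha$.

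Next I would prove $\bar\alpha$ is an isomorphism, which also pins down $\mu$. Fix $j_1\in J$. Each $k$-subset containing $j_1$ has the form $\{j_1\}\cup I$ with $I$ a $(k-1)$-subset avoiding $j_1$, and the relation $w_I\in(\sing V)^{\perp}$ lets me replace $v_{j_1,i_1,\dots,i_{k-1}}$ by $-\sum_{j\neq j_1}v_{j,i_1,\dots,i_{k-1}}$ modulo $(\sing V)^{\perp}$; thus every vector is congruent modulo $(\sing V)^{\perp}$ to a combination of the $v_{i_1,\dots,i_k}$ with $j_1\notin\{i_1,\dots,i_k\}$. Since $\alpha$ annihilates $(\sing V)^{\perp}$, and the monomials $[p_{i_1}\cdots p_{i_k}]$ with $j_1\notin\{i_1,\dots,i_k\}$ form a basis of $A_\Phi(z)$ (Lemma \ref{p properties}(v)) while the $d_{i_1,\dots,i_k}$ are nonzero, I conclude $\ker\alpha=(\sing V)^{\perp}$, so $\bar\alpha$ is injective; it is surjective because its image contains a basis. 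Hence $\bar\alpha$ is an isomorphism (and $\dim\sing V={n-1\choose k}$). Comparing definitions, $\mu$ and $\bar\alpha^{-1}$ agree on the basis $\{d_I[p_I]:j_1\notin I\}$, so $\mu=\bar\alpha^{-1}$ for every $j_1$; as $\bar\alpha$ does not involve $j_1$, neither does $\mu$, and $\mu$ is an isomorphism.

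Finally, for the intertwining I would check $\mu([a_j/f_j]\,\xi)=K_j(z)\,\mu(\xi)$ for all $j\in J$, which by linearity reduces to the basis and, through $\bar\alpha$, is the comparison of multiplication by $p_j$ on $A_\Phi(z)$ with $K_j(z)$ on $\sing V$. Multiplying a basis monomial $d_I[p_I]$ by $p_j$ gives a degree $k+1$ element that the relation of second kind (Lemma \ref{p properties}(iii)) rewrites as an explicit combination of degree $k$ monomials, with denominators $f_{I\cup\{j\}}(z)$ and coefficients built from the $a_i$ and the Pl\"ucker coordinates; the degenerate cases $j\in I$ or $j=j_1$ are brought back to the $j_1$-avoiding basis using the first-kind relations and Lemmas \ref{lem C comb}--\ref{lem polyn comb 2}. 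I would then match this, term by term, against the explicit matrix of $K_j(z)$ in \cite[Formula (5.3)]{V3}. \textbf{The main obstacle} is precisely this last step: reconciling the rational structure constants produced by the second-kind reduction with the closed formula for $K_j(z)$, correctly tracking signs, the denominators $f_{I\cup\{j\}}(z)$, and the orthogonal projection $s^{\perp}$ built into $\mu$. Once the two agree on the generating monomials for every $j$, linearity yields the asserted identification of $A_\Phi(z)$-modules.
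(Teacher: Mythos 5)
The paper itself offers no proof of this theorem: it is imported verbatim from \cite[Corollary 6.16]{V4}, so there is no in-text argument to compare yours against. Judged on its own terms, the first half of your proposal is correct and complete. The map $\alpha(v_{i_1,\dots,i_k})=d_{i_1,\dots,i_k}[p_{i_1}\cdots p_{i_k}]$ is well defined by the alternating/symmetric cancellation you describe; your identification of $(\sing V)^\perp$ as the span of the vectors $w_I=\sum_{j\in J}v_{j,i_1,\dots,i_{k-1}}$ is right (the diagonal contravariant form contributes the factor $a_j$ that turns $S(w_I,\cdot)$ into $\prod_{i\in I}a_i$ times the singular-vector condition), and $\alpha(w_I)=[\,p_{i_1}\cdots p_{i_{k-1}}F_I(p)\,]=0$ by the first-kind relations. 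Combined with Lemma \ref{p properties}(v), this pins down $\ker\alpha=(\sing V)^\perp$, shows $\bar\alpha$ is an isomorphism, and gives $\mu=\bar\alpha^{-1}$ independently of $j_1$. (You do quietly use that $V=\sing V\oplus(\sing V)^\perp$, so that $s^\perp$ exists and $\alpha=\bar\alpha\circ s^\perp$; this is implicit in the paper's definition of $s^\perp$, so it is fair to assume, but it deserves a sentence.)

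The genuine gap is the second assertion, which is the substantive content of the theorem: that $\mu$ intertwines multiplication by $p_j$ with $K_j(z)$. You reduce it to comparing the structure constants produced by the second-kind relations with the matrix entries of $K_j(z)$ from \cite[Formula (5.3)]{V3}, and then you stop, explicitly calling this comparison ``the main obstacle.'' Since the present paper never writes down $K_j(z)$, nothing in your argument constrains these operators at all; as written, the proof would go through verbatim for any family of operators $K_j(z)$ on $\sing V$, so the intertwining claim is not established. To close the gap you must either (a) import the explicit formula for $K_j(z)$ and carry out the term-by-term match you describe, tracking the signs, the factor $f_{I\cup\{j\}}(z)$, and the projection $s^\perp$, or (b) argue structurally, via the hypergeometric pairing between $A_\Phi(z)$ and $\sing V$ used in \cite{V3, V4}, that $K_j(z)$ is by construction the operator corresponding to $[a_j/f_j]$ under $\mu$. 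Either way, this is exactly the step that the citation of \cite[Corollary 6.16]{V4} is meant to supply, and your proposal does not yet contain it.
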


\begin{cor}
\label{cor Char=Lagr}
The characteristic variety $\Spect$ of the Gauss-Manin differential equations coincides with the Lagrangian variety of the master
function.
\end{cor}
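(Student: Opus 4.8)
The plan is to transport the defining condition of $\Spect$ through the isomorphism $\mu$ of Theorem~\ref{thm iso} and then to recognize the resulting joint-eigenvalue condition as membership in $\La$, using the algebra identifications of Theorems~\ref{thm main} and \ref{thm main 3}. Fix $z\in\C^n-\Delta$. By Theorem~\ref{thm iso}, for each $j\in J$ the map $\mu$ carries the operator $K_j(z)$ on $\sing V$ to the operator $m_{p_j}$ of multiplication by $p_j$ on the finite-dimensional commutative algebra $A_\Phi(z)$. Consequently a nonzero $v\in\sing V$ satisfies $K_j(z)\>v=p_j\>v$ for all $j\in J$ if and only if $w=\mu^{-1}(v)$ is a nonzero common eigenvector of the operators $m_{p_j}$, the eigenvalue of $m_{p_j}$ being equal to the coordinate $p_j$. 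Thus $(z,p)\in\Spect$ precisely when the tuple $(p_1,\dots,p_n)$ is a joint eigenvalue of the commuting operators $m_{p_1},\dots,m_{p_n}$ on $A_\Phi(z)$.

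I will then invoke the standard fact that for a finite-dimensional commutative $\C$-algebra $A$ generated by elements $g_1,\dots,g_n$, a tuple $(\la_1,\dots,\la_n)\in\C^n$ is a joint eigenvalue of $m_{g_1},\dots,m_{g_n}$ if and only if there is a $\C$-algebra homomorphism $\chi\colon A\to\C$ with $\chi(g_j)=\la_j$; equivalently, if and only if $(\la_1,\dots,\la_n)$ lies in the image of $\Spect A\hookrightarrow\C^n$ defined by the generators. One direction is immediate, a common eigenvector spanning a one-dimensional submodule on which $A$ acts through such a character. For the converse, $(g_1-\la_1,\dots,g_n-\la_n)$ is then a maximal ideal $\mathfrak m$, and since $A$ is Artinian it splits as a product of its local factors; the socle of the factor at $\mathfrak m$ is nonzero, and any nonzero socle element is a common eigenvector with eigenvalues $(\la_1,\dots,\la_n)$. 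Applying this with $A=A_\Phi(z)$ and $g_j=p_j$, which generate $A_\Phi(z)$ and render it finite-dimensional by parts (i) and (v) of Lemma~\ref{p properties}, shows that $(z,p)\in\Spect$ if and only if $p$ lies in the image of $\Spect A_\Phi(z)$ under the coordinate functions $p_1,\dots,p_n$.

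Finally I identify this image with the fiber of $\La$ over $z$ and let $z$ vary. By Theorem~\ref{thm main}, $A_\Phi(z)\cong\tilde A(z)$, the coordinate ring of the subvariety $C(z)\subset(\C^\times)^n$ cut out by the polynomials $F_I(p)$, $|I|=k-1$, and $F_I(z,p)$, $|I|=k+1$, with $p_j$ corresponding to the $j$-th coordinate function; hence the image of $\Spect A_\Phi(z)$ under $(p_1,\dots,p_n)$ is exactly $C(z)$. By Theorem~\ref{thm main 3} the variety $\La$ is the common zero set of these same polynomials, so $C(z)$ is precisely the fiber $\La\cap(\{z\}\times(\C^\times)^n)$. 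Combining the three steps gives $(z,p)\in\Spect$ if and only if $(z,p)\in\La$ for every $z\in\C^n-\Delta$, which is the claimed equality. The step needing the most care — the main obstacle — is the commutative-algebra lemma of the second paragraph, and in particular the passage from a character back to an honest common eigenvector inside $A_\Phi(z)$; this relies on the Artinian structure and the nonvanishing of the socle of each local factor.
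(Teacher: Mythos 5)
Your argument is correct and follows the same route the paper intends: the paper states the corollary as an immediate consequence of Theorem~\ref{thm iso}, the point being exactly that joint eigenvalues of the commuting operators $K_j(z)$ become joint eigenvalues of the multiplication operators $p_j$ on $A_\Phi(z)$, which are the points of $\on{Spec} A_\Phi(z)\cong\on{Spec}\tilde A(z)=C(z)$, i.e.\ the fiber of $\La$ over $z$. Your second paragraph simply supplies the standard Artinian-algebra lemma that the paper leaves implicit, so this is the same proof spelled out in more detail.
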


Thus the statements in Section \ref{ sec Lagrangian var function} give us information on the characteristic variety of the Gauss-Manin differential equations.
In particular, equations in $A_\Phi(z)$ are satisfied in $\B(z)$, for example,
\bea
f_{i_1, i_2,\dots,i_{k+1}}(z)K_{i_1}(z)\dots K_{i_{k+1}}(z)
=
\sum_{m=1}^{k+1} (-1)^{m-1} a_{i_m} d_{i_1,\dots,\widehat{i_m},\dots,i_{k+1}} K_{i_1}(z)\dots \widehat{K_{i_m}(z)}\dots K_{i_{k+1}}(z).
\eea


\bigskip

\end{document}